\newtheorem{theorem}{Theorem}
\newtheorem{lemma}{Lemma}
\renewcommand{\d}[1]{\ensuremath{\operatorname{d}\!{#1}}}
\begin{document}

\title{Convergence Rates for a Class of Estimators Based on Stein's Method}
\author{Chris J. Oates$^{1,4}$, Jon Cockayne$^2$, Fran\c{c}ois-Xavier Briol$^2$, Mark Girolami$^{3,4}$\\
$^1$Newcastle University \\
$^2$University of Warwick \\
$^3$Imperial College London\\
$^4$Alan Turing Institute}

\maketitle

\begin{abstract}
Gradient information on the sampling distribution can be used to reduce the variance of Monte Carlo estimators via Stein's method.
An important application is that of estimating an expectation of a test function along the sample path of a Markov chain, where gradient information enables convergence rate improvement at the cost of a linear system which must be solved.
The contribution of this paper is to establish theoretical bounds on convergence rates for a class of estimators based on Stein's method.
Our analysis accounts for (i) the degree of smoothness of the sampling distribution and test function, (ii) the dimension of the state space, and (iii) the case of non-independent samples arising from a Markov chain.
These results provide insight into the rapid convergence of gradient-based estimators observed for low-dimensional problems, as well as clarifying a curse-of-dimension that appears inherent to such methods.
\end{abstract}

{\it Keywords:} asymptotics, control functionals, reproducing kernel, scattered data, variance reduction

\section{Introduction} \label{intro}

This paper considers methods to estimate the integral
$$
\int f \d\Pi
$$ 
of a test function $f$ against a distribution $\Pi$ based on evaluation of $f$ at a finite number $n$ of inputs. Our work is motivated by challenging settings in which (i) the variance $\sigma^2(f) = \int (f - \int f \d\Pi )^2 \d\Pi$ is large relative to $n$, and (ii) the distribution $\Pi$ is only available up to an unknown normalisation constant.
Such problems arise in Bayesian statistics when the cost of sampling from the posterior is prohibitive, requiring that posterior expectations are approximated based on a small number $n$ of evaluations of the integrand.
Indeed, the intrinsic accuracy of ergodic averages, such as obtained via Markov chain Monte Carlo (MCMC) methods \citep{Robert}, can lead to unacceptably high integration error when $n$ is small.
This paper considers a class of estimators inspired by Stein's method \citep{Stein}, based on integration-by-parts in this context: 
\begin{eqnarray}
\int f \d\Pi & = & - \int \left( \int f \d x \right) \cdot \frac{\mathrm{d}}{\mathrm{d}x} \log \pi \d\Pi, \label{Stein}
\end{eqnarray}
subject to boundary conditions, where $\pi$ is a density for $\Pi$.
These estimators ensure an integration error $o_{\mathrm{P}}(n^{-\frac{1}{2}})$, provided that gradient information on the sampling distribution can be obtained.
This is often the case; indeed, sophisticated software for automatic differentiation of statistical models has been developed \citep[e.g.][]{Carpenter,Maclaurin}.

\vspace{5pt}
\noindent {\bf Main Contribution:}
The primary contribution of this paper is to establish convergence rates for a class of estimators based on Stein's method.
These estimators, first described in \cite{Oates}, require as input both function evaluations $\{f(\bm{x}_i)\}_{i=1}^n$ and gradient evaluations $\{\nabla \log \pi(\bm{x}_i)\}_{i=1}^n$, where the states $\{\bm{x}_i\}_{i=1}^n$ themselves can be either independent or correlated draws from $\Pi$.
Our central results are asymptotic rates for integration error; these enable us to compare and quantify the improvement in estimator precision relative to standard Monte Carlo methods and in doing so we fill a theoretical void.

The estimators that we consider can be viewed as a control variate (or `control functional') method, and this concept is discussed next.

\vspace{5pt}
\noindent {\bf Control Functionals:}
The classical control variate method proceeds by seeking a collection of non-trivial statistics $\{\psi_i\}_{i=1}^k$, such that each satisfies $\int \psi_i \d\Pi = 0$.
Then a surrogate function 
$$
f' = f - a_1\psi_1 - \dots - a_k\psi_k
$$ 
is constructed such that automatically $\int f' \d\Pi = \int f \d\Pi$ and, for suitably chosen $\{a_i\}_{i=1}^k$, a variance reduction $\sigma^2(f') < \sigma^2(f)$ might be obtained; for further details see e.g. \cite{Rubinstein}.
For specific problems it is sometimes possible to identify control variates, for example based on physical considerations \citep[e.g.][]{Assaraf2}.
For Monte Carlo integration based on Markov chains, it is sometimes possible to construct control variates based on statistics relating to the sample path.
In this direction, the problem of constructing control variates for discrete state spaces was essentially solved by \cite{Andradottir} and for continuous state spaces, recent contributions include \cite{Hammer,Dellaportas,Li,Mijatovic,Mijatovic2}.
Control variates can alternatively be constructed based on gradient information on the sampling distribution \citep{Assaraf,Mira,Oates}.

The estimators considered here stem from a recent development that extends control variates to control {\it functionals}.
This idea is motivated by the observation that the methods listed above are (in effect) solving a misspecified regression problem, since in general $f$ does not belong to the linear span of the statistics $\{\psi_i\}_{i=1}^k$.
The recent work by \cite{Mijatovic,Oates} alleviates model misspecification by increasing the number $k$ of statistics alongside the number $n$ of samples so that the limiting space spanned by the statistics $\{\psi_i\}_{i=1}^\infty$ is dense in a class of functions that contains the test function $f$ of interest.
Both methods provide a non-parametric alternative to classical control variates whose error is $o_{\mathrm{P}}(n^{-\frac{1}{2}})$.
Of these two proposed solutions, \cite{Mijatovic} is not considered here since it is unclear how to proceed when $\Pi$ is known only up to a normalisation constant.
On the other hand the control functional method of \cite{Oates} is straight-forward to implement when gradients $\{\nabla \log \pi(\bm{x}_i)\}_{i=1}^n$ are provided.
Understanding the theoretical properties of this method is the focus of the present research.

\vspace{5pt}
\noindent {\bf Technical Contribution:}
This paper establishes that the estimators of \cite{Oates} incur an integration error $O_{\mathrm{P}}(n^{-\frac{1}{2} - \frac{a \wedge b}{d} + \epsilon})$, where $a$ is related to the smoothness of the density $\pi$, $b$ is related to the smoothness of the test function $f$, $d$ is the dimension of the domain of integration and $\epsilon > 0$ can be arbitrarily small (a notational convention used to hide logarithmic factors).
This analysis provides important insight into the strong performance that has been observed for these estimators in certain low-dimensional applications \citep{Oates,Liu4}.
Indeed, recall that the (na\"{i}ve) computational cost associated with these methods, i.e. the cost of solving a linear system, is $c = O(n^3)$.
This cost can also involve a large constant factor when hyper-parameters are to be jointly estimated.
Thus, whilst for standard Monte Carlo methods an estimator error of $O_{\mathrm{P}}(c^{-\frac{1}{2}})$ can be achieved at computational cost $c$, for gradient-based control functionals 
$$
\text{error for cost }c \quad = \quad O_{\mathrm{P}}\left( (c^{\frac{1}{3}})^{-\frac{1}{2} - \frac{a \wedge b}{d} + \epsilon} \right) \quad = \quad O_{\mathrm{P}} \left( c^{-\frac{1}{6} + \frac{d - a \wedge b}{3d} + \epsilon } \right) .
$$
This demonstrates that gradient-based control functionals have asymptotically lower error for the same fixed computational cost $c$ whenever $a \wedge b > d$, which occurs when both the density $\pi$ and the test function $f$ are sufficiently smooth.
In the situation where the computational bottleneck is evaluation of $f$, not solution of the linear system, then the computational gain can be even more substantial.
At the same time, the critical dependence on $d$ highlights the curse-of-dimension that appears inherent to such methods.
Going forward, these results provide a benchmark for future high-dimensional development.

\vspace{5pt}
\noindent {\bf Relation to Other Acceleration Methods:}
Accelerated rates of convergence can be achieved by other means, including quasi-Monte Carlo \citep[QMC;][]{Niederreiter}.
Consider the ratio estimator:
\begin{eqnarray}
\int f \d\Pi & \approx & \frac{ \frac{1}{n} \sum_{i=1}^n f(\bm{x}_i) \pi(\bm{x}_i) }{ \frac{1}{n} \sum_{i=1}^n \pi(\bm{x}_i) } \label{ratio estimator}
\end{eqnarray}
For appropriate randomised point sets $\{\bm{x}_i\}_{i=1}^n$, the ratio estimator converges at a rate limited by the least smooth of $f \cdot \pi$ and $f$, i.e. limited by $\frac{a \wedge b}{d}$ (at least, in the absence of additional conditions on the mixed partial derivatives, which we have not assumed)\footnote{In this section the notation $a$ and $b$ is used as a shorthand for the ``smoothness'' of, respectively, $\pi$ and $f$.
The precise mathematical definition of $a$ and $b$ differs between manuscripts and the results discussed here should not be directly compared.}.
See \cite{Dick} for a recent study of this approach in the context of Bayesian inference for an unknown parameter in a partial differential equation model.

The method studied herein can be contrasted with QMC methods in at least two respects:
(1) The states $\{\bm{x}_i\}_{i=1}^n$ can be independent (or correlated) draws from $\Pi$, which avoids the need to specifically construct a point set.
This is an important benefit in cases where the domain of integration is complicated - indeed, our results hold for any domain of integration for which an interior cone condition can be established.
(2) The estimator studied herein is unbiased, whereas ratio estimators of the form in Eqn. \ref{ratio estimator} will be biased in general.
The unbiased nature of the estimator, in common with standard Monte Carlo methods, facilitates convenient diagnostics to estimate the extent of Monte Carlo error and is therefore useful.

Recent work from \cite{Delyon} and \cite{Azais} considered estimators of the form
\begin{eqnarray}
\int f \d\Lambda & \approx & \frac{1}{n} \sum_{i=1}^n \frac{f(\bm{x}_i)}{\hat{\pi}(\bm{x}_i)} \label{Delyon estimator}
\end{eqnarray}
where $\hat{\pi}$ is a kernel density estimate for $\pi = \mathrm{d}\Pi / \mathrm{d}\Lambda$ based on a collection of (possibly correlated) draws $\{\bm{x}_i\}_{i=1}^n$ from $\Pi$.
Again, theoretical results established an error of $o_{\mathrm{P}}(n^{-\frac{1}{2}})$ with an explicit rate gated by a term of the form $\frac{a \wedge b}{d}$.
However, this approach applies to integrals with respect to a known, normalised reference measure $\Lambda$ rather than with respect to $\Pi$.

\vspace{5pt}
\noindent {\bf Outline:}
Below in Sec. \ref{methods} we describe the class of estimators that were considered and present our main theoretical results, including the case of non-independent samples arising from a Markov chain sample path.
Our theoretical analysis combines error bounds from the scattered data approximation literature with stability results for Markov chains; proofs are contained in the electronic supplement.
Numerical results in Sec. \ref{illustration} confirm these error rates are realised.
Finally the importance of our findings is discussed in Sec. \ref{discuss}.

\section{Methods} \label{methods}

First we fix notation before describing the estimation method. 

\subsection{Set-up and Notation} \label{setup}

Consider an open and bounded set $\mathcal{X} \subset \mathbb{R}^d$, $d \in \mathbb{N}$, with boundary $\partial \mathcal{X}$.
Let $\mathcal{B} = \mathcal{B}(\mathcal{X} \cup \partial \mathcal{X})$ denote the Borel $\sigma$-algebra on $\mathcal{X} \cup \partial \mathcal{X}$ and equip $(\mathcal{X} \cup \partial \mathcal{X},\mathcal{B})$ with the reference measure $\Lambda$ induced from the restriction of Lebesgue measure on $\mathbb{R}^d$.
Further, consider a random variable $\bm{X}$ on $\mathcal{X} \cup \partial \mathcal{X}$ with distribution $\Pi$ and suppose $\Pi$ admits a density $\pi = \d\Pi / \d\Lambda$. 

The following notation will be used: $\mathbb{N}_0 := \mathbb{N} \cup \{0\}$, $a \wedge b := \min(a,b)$, $a_+ := \max(a,0)$, $\bm{1} = [1,\dots,1]^\top$, $\|\bm{x}\|_2^2 := \sum_{i=1}^d x_i^2$, $\nabla_{\bm{x}} : = [\partial/\partial x_1,\dots,\partial/\partial x_d]^\top$, $1_A(\bm{x}) = 1$ is the indicator of the event $\bm{x} \in A$.
Write $L^2(\mathcal{X},\Pi)$ for the vector space of measurable functions $f:\mathcal{X} \rightarrow \mathbb{R}$ for which $\sigma^2(f) := \int (f - \int f \d\Pi)^2 \d\Pi$ exists and is finite. 
Write $C^k(\mathcal{X})$ for the set of measurable functions for which continuous partial derivatives exist on $\mathcal{X}$ up to order $k \in \mathbb{N}_0$.
A function $g:\mathcal{X} \times \mathcal{X} \rightarrow \mathbb{R}$ is said to be in $C_2^k(\mathcal{X})$ if $\partial^{2k}g / \partial x_{i_1} \dots \partial x_{i_k} \partial x_{j_1}' \dots \partial x_{j_k}'$ is $C^0(\mathcal{X} \times \mathcal{X})$ for all $i_1,\dots,i_k,j_1,\dots,j_k \in \{1,\dots,d\}$.
The notation $\|f\|_\infty := \sup_{\bm{x} \in \mathcal{X}} |f(\bm{x})|$ will be used.

\subsection{Control Functionals}

This section introduces the control functional method for integration, a non-parametric extension of the classical control variate method.
Recall that the trade-off between random sampling and deterministic approximation in the context of integration is well-understood \citep{Bakhvalov}.
Our starting point is, in a similar vein, to establish a trade-off between random sampling and {\it stochastic} approximation.

We assume throughout that the test function $f$ belongs to $L^2(\mathcal{X},\Pi)$ and that the boundary $\partial\mathcal{X}$ is piecewise smooth.
Consider an independent sample from $\Pi$, denoted $\mathcal{D} = \{\bm{x}_i\}_{i=1}^n$.
This is partitioned into disjoint subsets $\mathcal{D}_0 = \{\bm{x}_i\}_{i=1}^m$ and $\mathcal{D}_1 = \{\bm{x}_i\}_{i=m+1}^n$, where $1 \leq m < n$.
Although $m$, $n$ are fixed, we will be interested in the asymptotic regime where $m = O(n^\gamma)$ for some $\gamma \in [0,1]$.
Consider constructing an approximation $f_m \in L^2(\mathcal{X},\Pi)$ to $f$, based on $\mathcal{D}_0$.
Stochasticity in $f_m$ is induced via the sampling distribution of elements in $\mathcal{D}_0$.
The integral $\int f_m \d\Pi$ is required to be analytically tractable; we will return to this point.

The estimators that we study take the form
\begin{eqnarray}
I_{m,n} := \frac{1}{n-m}\sum_{i=m+1}^{n} f(\bm{x}_{i}) - \left( f_m(\bm{x}_i) - \int f_m \d\Pi \right) . \label{splitting estimators}
\end{eqnarray}
Such sample-splitting estimators are unbiased, i.e. $\mathbb{E}_{\mathcal{D}_1}[I_{m,n}] = \int f \d\Pi$, where the expectation here is with respect to the sampling distribution $\Pi$ of the $n-m$ random variables that constitute $\mathcal{D}_1$, and is conditional on fixed $\mathcal{D}_0$.
The corresponding estimator variance, again conditional on $\mathcal{D}_0$, is $\mathbb{V}_{\mathcal{D}_1}[I_{m,n}] = (n-m)^{-1} \sigma^2(f - f_m)$.
This formulation encompasses control variates as a special case where $f_m = a_1\psi_1 + \dots + a_k \psi_k$, $k \in \mathbb{N}$, and $\mathcal{D}_0$ are used to select suitable values for the coefficients $\{a_i\}_{i=1}^k$ \citep[see e.g.][]{Rubinstein}.

To go beyond control variates and achieve an error of $o_{\mathrm{P}}(n^{-1/2})$, we must construct increasingly accurate approximations $f_m$ to $f$.
Indeed, under the scaling $m = O(n^\gamma)$, if the expected functional approximation error satisfies $\mathbb{E}_{\mathcal{D}_0}[\sigma^2(f - f_m)] = O(m^{-\delta})$ for some $\delta \geq 0$, then 
\begin{eqnarray}
\mathbb{E}_{\mathcal{D}_0} \mathbb{E}_{\mathcal{D}_1}\left[\left(I_{m,n} - \int f \d\Pi\right)^2\right] = O(n^{-1-\gamma\delta}). \label{analyse mse}
\end{eqnarray}
Here we have written $\mathbb{E}_{\mathcal{D}_0}$ for the expectation with respect to the sampling distribution $\Pi$ of the $m$ random variables that constitute $\mathcal{D}_0$.
The rate above is optimised by taking $\gamma = 1$, so that an optimal sample-split satisfies $m/n \rightarrow \rho$ for some $\rho \in (0,1]$ as $n \rightarrow \infty$; this will be assumed in the sequel.

When $\Pi$ is given via an un-normalised density, this framework can only be exploited if it is possible to construct approximations $f_m$ whose integrals $\int f_m \d\Pi$ are available in closed-form.
If and when this is possible, the term in parentheses in Eqn. \ref{splitting estimators} is known as a \emph{control functional}.
\cite{Oates} showed how to build a flexible class of control functionals based on Stein's method; the key points are presented next.

\subsection{Stein Operator}

To begin, we make the following assumptions on the density $\pi$:
\begin{enumerate}
\item[(A1)] $\pi \in C^{a+1}(\mathcal{X} \cup \partial \mathcal{X})$ for some $a \in \mathbb{N}_0$.
\item[(A2)] $\pi > 0$ in $\mathcal{X}$. 
\end{enumerate}
The gradient function $\nabla_{\bm{x}} \log \pi(\cdot)$ is well-defined and $C^a(\mathcal{X} \cup \partial \mathcal{X})$ by (A1,2).
Crucially, gradients can be evaluated even when $\pi$ is only available un-normalised.
Consider the following Stein operator:
\begin{eqnarray}
\mathbb{S}_\pi \; : \; C^1(\mathcal{X}) \times \dots \times C^1(\mathcal{X}) & \rightarrow & C^0(\mathcal{X}) \nonumber \\
\bm{\phi}(\cdot) & \mapsto & \mathbb{S}_\pi[\bm{\phi}](\cdot) :=  \nabla_{\bm{x}} \cdot \bm{\phi}(\cdot) + \bm{\phi}(\cdot) \cdot \nabla_{\bm{x}} \log \pi(\cdot) \label{Stein operator}
\end{eqnarray}
This definition can be motivated in several ways, including via Schr\"{o}dinger Hamiltonians \citep{Assaraf} and via the generator method of Barbour applied to an overdamped Langevin diffusion \citep{Gorham}.
The choice of Stein operator is not unique and some alternatives are listed in \cite{Gorham2}.

For functional approximation we follow \cite{Oates} and study approximations of the form
\begin{eqnarray}
f_m(\cdot) & := & \beta + \mathbb{S}_\pi[\bm{\phi}](\cdot) \label{CFs defin}
\end{eqnarray}
where $\beta \in \mathbb{R}$ is a constant and $\mathbb{S}_{\pi}[\bm{\phi}](\cdot)$ acts as a flexible function, parametrised by the choice of $\bm{\phi} \in C^1(\mathcal{X}) \times \dots \times C^1(\mathcal{X})$.
Under regularity assumptions introduced below, integration-by-parts (Eqn. \ref{Stein}) can be applied to obtain $\int \mathbb{S}_\pi[\bm{\phi}] \d\Pi = 0$ (Lemma \ref{lem integrate to zero}).
Thus, for this class of functions, $\int f_m \d \Pi$ permits a trivial closed-form and $\mathbb{S}_\pi[\bm{\phi}]$ is a control functional (i.e. integrates to 0).

The choice of $\beta$ and $\bm{\phi}$ can be cast as an optimisation problem over a Hilbert space and this will be the focus next.

\subsection{Stein Operators on Hilbert Spaces} \label{reproducing kernel Hilbert space construct}

This section formulates the construction of $f_m$ as approximation in a Hilbert space $\mathcal{H}_+ \subset L^2(\mathcal{X},\Pi)$.
This construction first appeared in \cite{Oates} and was subsequently explored in several papers \citep[e.g.][]{Liu,Chwialkowski,Gorham3}.

First we restrict each component function $\phi_i : \mathcal{X} \rightarrow \mathbb{R}$ to belong to a Hilbert space $\mathcal{H}$ with inner product $\langle \cdot,\cdot \rangle_{\mathcal{H}}$.
Moreover we insist that $\mathcal{H}$ is a (non-trivial) reproducing kernel Hilbert space (RKHS), i.e. there exists a (non-zero) symmetric positive definite function $k : \mathcal{X} \times \mathcal{X} \rightarrow \mathbb{R}$ such that (i) for all $\bm{x} \in \mathcal{X}$ we have $k(\cdot,\bm{x}) \in \mathcal{H}$ and (ii) for all $\bm{x} \in \mathcal{X}$ and $h \in \mathcal{H}$ we have $h(\bm{x}) = \langle h , k(\cdot,\bm{x}) \rangle_{\mathcal{H}}$ \citep[see][for background]{Berlinet}.
The vector-valued function $\bm{\phi} : \mathcal{X} \rightarrow \mathbb{R}^d$ is defined in the Cartesian product space $\mathcal{H}^d := \mathcal{H} \times \dots \times \mathcal{H}$, itself a Hilbert space with the inner product $\langle \bm{\phi}, \bm{\phi}' \rangle_{\mathcal{H}^d} = \sum_{i=1}^d \langle \phi_i, \phi_i' \rangle_{\mathcal{H}}$.

To ensure $\mathcal{H} \subseteq C^1(\mathcal{X})$ we make an assumption on $k$ that will be enforced by construction through selection of the kernel:
\begin{enumerate}
\item[(A3)] $k \in C_2^{b+1}(\mathcal{X} \cup \partial \mathcal{X})$ for some $b \in \mathbb{N}_0$.
\end{enumerate}

\subsubsection{Boundary Conditions}

Two further assumptions are made on $\pi$.
To this end, denote by $\mathcal{Q}(k)$ the set of densities $q = \d{} Q / \d{} \Lambda$ on $(\mathcal{X} \cup \partial \mathcal{X}, \mathcal{B})$ such that (a) $q \in C^1(\mathcal{X} \cup \partial \mathcal{X})$, (b) $q > 0$ in $\mathcal{X}$, and (c) for all $i = 1,\dots,d$ we have $\nabla_{x_i} \log q \in L^2(\mathcal{X} \cup \partial \mathcal{X},Q')$ for all distributions $Q'$ on $(\mathcal{X} \cup \partial \mathcal{X},\mathcal{B})$.
Let $\mathcal{R}(k)$ denote the set of densities $q$ for which $q(\bm{x}) k(\bm{x} , \cdot) = 0$ for all $\bm{x} \in \partial \mathcal{X}$.
\begin{enumerate}
\item[(A$\bar{2}$)] $\pi \in \mathcal{Q}(k)$
\item[(A4)] $\pi \in \mathcal{R}(k)$
\end{enumerate}
The assumption (A$\bar{2}$) was first discussed in \cite{Chwialkowski}; note in particular that (A$\bar{2}$) implies (A2).
A constructive approach to ensure (A4) holds is to start with an arbitrary RKHS $\tilde{\mathcal{H}}$ with reproducing kernel $\tilde{k}$ and let $B : \tilde{\mathcal{H}} \rightarrow \text{im}(B)$ be a linear operator such that $B \varphi(\bm{x}) := \delta(\bm{x}) \varphi(\bm{x})$, where $\delta(\cdot)$ is a smooth function such that $\pi(\cdot) \delta(\cdot)$ vanishes on $\partial \mathcal{X}$.
Then $\mathcal{H} = \text{im}(B)$ is a RKHS whose kernel $k$ is defined by $k(\bm{x},\bm{x}') = \delta(\bm{x}) \delta(\bm{x}') \tilde{k}(\bm{x},\bm{x}')$.
This construction will be used in Sec. \ref{illustration}.
The following Lemma shows that $\mathbb{S}_\pi[\bm{\phi}]$ is a control functional:

\begin{lemma} \label{lem integrate to zero}
Under (A1-4), if $\bm{\phi} \in \mathcal{H}^d$ then $\int \mathbb{S}_\pi[\bm{\phi}] \d \Pi = 0$.
\end{lemma}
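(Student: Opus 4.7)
The plan is to reduce the integral to a boundary integral via the divergence theorem and then use the reproducing property together with (A4) to argue that the boundary term vanishes componentwise.

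First I would observe the algebraic identity
\begin{equation*}
\mathbb{S}_\pi[\bm{\phi}](\bm{x}) \, \pi(\bm{x}) \;=\; \pi(\bm{x}) \nabla_{\bm{x}}\cdot\bm{\phi}(\bm{x}) + \bm{\phi}(\bm{x})\cdot\nabla_{\bm{x}}\pi(\bm{x}) \;=\; \nabla_{\bm{x}}\cdot\bigl(\pi(\bm{x})\bm{\phi}(\bm{x})\bigr),
\end{equation*}
which holds pointwise on $\mathcal{X}$ by the product rule, where I have used $\nabla_{\bm{x}}\pi = \pi\,\nabla_{\bm{x}}\log\pi$ (justified on $\mathcal{X}$ by (A2)). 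So the integrand is exactly a divergence of the vector field $\pi\bm{\phi}$.

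Next I would apply the divergence theorem to rewrite
\begin{equation*}
\int \mathbb{S}_\pi[\bm{\phi}] \d\Pi \;=\; \int_{\mathcal{X}} \nabla_{\bm{x}}\cdot(\pi\bm{\phi}) \d\Lambda \;=\; \int_{\partial \mathcal{X}} \pi(\bm{x})\, \bm{\phi}(\bm{x})\cdot \bm{n}(\bm{x}) \d S(\bm{x}),
\end{equation*}
where $\bm{n}$ is the outward unit normal. The hypotheses are in place to legitimate this: the boundary $\partial\mathcal{X}$ is piecewise smooth, (A1) gives $\pi \in C^{a+1} \subseteq C^{1}$ up to $\partial\mathcal{X}$, (A3) together with the standard RKHS embedding implies $\mathcal{H} \subseteq C^{b+1}(\mathcal{X}\cup\partial\mathcal{X}) \subseteq C^1$ so that $\bm{\phi} \in (C^1)^d$ up to the boundary, and $\mathcal{X}$ is bounded.

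Finally I would show the surface integral vanishes by examining each component of $\pi\bm{\phi}$ on $\partial \mathcal{X}$. For $\bm{x}\in\partial\mathcal{X}$, the reproducing property gives
\begin{equation*}
\pi(\bm{x}) \phi_i(\bm{x}) \;=\; \pi(\bm{x}) \langle \phi_i, k(\cdot,\bm{x})\rangle_{\mathcal{H}} \;=\; \langle \phi_i, \pi(\bm{x})\, k(\cdot,\bm{x})\rangle_{\mathcal{H}},
\end{equation*}
and by (A4) the element $\pi(\bm{x})k(\cdot,\bm{x})$ is zero in $\mathcal{H}$, so this inner product is $0$. Hence $\pi\bm{\phi}$ vanishes on $\partial\mathcal{X}$ and the boundary integral is $0$, completing the proof.

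The only subtle step I anticipate is cleanly justifying the divergence theorem and the vanishing of $\pi k(\cdot,\bm{x})$ on the boundary as an element of $\mathcal{H}$ (rather than merely pointwise); everything else is a short calculation once this is in place.
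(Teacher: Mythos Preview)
Your proposal is correct and follows the same core strategy as the paper: rewrite $\mathbb{S}_\pi[\bm{\phi}]\,\pi$ as the divergence $\nabla\cdot(\pi\bm{\phi})$ and reduce to a boundary integral, then kill that integral using (A4). The only difference is in how the boundary term is dispatched. The paper expands each $\phi_i$ as a Moore--Aronszajn series $\phi_i=\sum_j a_{i,j}k(\cdot,\bm{x}_{i,j})$ with $\bm{x}_{i,j}\in\mathcal{X}$, interchanges sum and integral via uniform convergence on the compact set $\mathcal{X}\cup\partial\mathcal{X}$, and then applies (A4) term-by-term to $\pi(\bm{x})k(\bm{x},\bm{x}_{i,j})$ with \emph{interior} second argument. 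You instead invoke the reproducing property directly at $\bm{x}\in\partial\mathcal{X}$ and use (A4) to conclude $\pi(\bm{x})k(\cdot,\bm{x})=0$ in $\mathcal{H}$.

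Your route is slightly cleaner but, as you already flag, it needs the reproducing identity to extend to boundary points; the paper's route sidesteps that by keeping all kernel arguments interior, at the cost of the series-interchange justification. Your missing step is short: from (A3), $\|k(\cdot,\bm{y})-k(\cdot,\bm{y}')\|_{\mathcal{H}}^2=k(\bm{y},\bm{y})-2k(\bm{y},\bm{y}')+k(\bm{y}',\bm{y}')\to 0$ as $\bm{y}'\to\bm{y}$, so $\bm{y}\mapsto k(\cdot,\bm{y})$ extends continuously (in $\mathcal{H}$) to $\mathcal{X}\cup\partial\mathcal{X}$, and then $\phi_i(\bm{x})=\lim_{\bm{y}\to\bm{x}}\langle\phi_i,k(\cdot,\bm{y})\rangle_{\mathcal{H}}=\langle\phi_i,k(\cdot,\bm{x})\rangle_{\mathcal{H}}$ for $\bm{x}\in\partial\mathcal{X}$. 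With that in hand your argument is complete.
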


Now, consider the set $\mathcal{H}_0 := \mathbb{S}_\pi [\mathcal{H}^d]$, whose elements $\mathbb{S}_\pi[\bm{\phi}]$ result from application of the Stein operator $\mathbb{S}_\pi$ to elements $\bm{\phi}$ of the Hilbert space $\mathcal{H}^d$.
\citet[][Thm. 1]{Oates} showed that $\mathcal{H}_0$ can be endowed with the gradient-based reproducing kernel
\begin{eqnarray}
k_0(\bm{x},\bm{x}') & := & (\nabla_{\bm{x}} \cdot \nabla_{\bm{x}'}) \; k(\bm{x},\bm{x}') + (\nabla_{\bm{x}} \log \pi(\bm{x})) \cdot (\nabla_{\bm{x}'} k(\bm{x},\bm{x}')) \label{k0 expression} \\ 
& & + \; (\nabla_{\bm{x}'} \log \pi(\bm{x}')) \cdot (\nabla_{\bm{x}} k(\bm{x},\bm{x}')) + (\nabla_{\bm{x}} \log \pi(\bm{x})) \cdot (\nabla_{\bm{x}'} \log \pi(\bm{x}')) \; k(\bm{x},\bm{x}'). \nonumber
\end{eqnarray}
From (A1,$\bar{2}$,3) it follows that $\mathcal{H}_0 \subseteq C^{a \wedge b}(\mathcal{X} \cup \partial \mathcal{X})$.
Moreover, under (A1,$\bar{2}$,3,4), the kernel $k_0$ satisfies
$\int k_0(\bm{x},\bm{x}') \Pi( \d {\bm{x}} ) = 0$ for all $\bm{x}' \in \mathcal{X}$.
Indeed, the function $k_0(\cdot,\bm{x}')$ belongs to $\mathcal{H}_0$ by definition and Lemma \ref{lem integrate to zero} shows that all elements of $\mathcal{H}_0$ have zero integral.

\subsubsection{Approximation in $\mathcal{H}_+$} \label{con approx}

Now we can be specific about how $\beta$ and $\bm{\phi}$ are selected.
Write $\mathcal{H}_{\mathbb{R}}$ for the RKHS of constant functions, characterised by the kernel $k_{\mathbb{R}}(\bm{x},\bm{x}') = c$, $c > 0$, for all $\bm{x},\bm{x}' \in \mathcal{X}$.
Denote the norms associated to $\mathcal{H}_{\mathbb{R}}$ and $\mathcal{H}_0$ respectively by $\|\cdot\|_{\mathcal{H}_{\mathbb{R}}}$ and $\|\cdot\|_{\mathcal{H}_0}$.
Write 
$$
\mathcal{H}_+ := \mathcal{H}_{\mathbb{R}} + \mathcal{H}_0 =\{\beta + \psi : \beta \in \mathcal{H}_{\mathbb{R}}, \; \psi \in \mathcal{H}_0\}.
$$
Equip $\mathcal{H}_+$ with the norm $\|f\|_{\mathcal{H}_+}^2 := \|\beta\|_{\mathcal{H}_{\mathbb{R}}}^2 + \|\psi\|_{\mathcal{H}_0}^2$.
It can be shown that $\mathcal{H}_+$ is a RKHS with kernel $k_+(\bm{x},\bm{x}') := k_{\mathbb{R}}(\bm{x},\bm{x}') + k_0(\bm{x},\bm{x}')$ \citep[][Thm. 5, p24]{Berlinet}.
From (A1-3) it follows that $\mathcal{H}_+ \subseteq C^{a \wedge b}(\mathcal{X})$.

The choice of $\beta$ and $\bm{\phi}$ is cast as a least-squares optimisation problem:
\begin{eqnarray*}
f_m \; := \; \arg\min \; \|h\|_{\mathcal{H}_+}^2 \; \text{ s.t. } \forall \; i = 1,\dots,m, \; h(\bm{x}_i) = f(\bm{x}_i), \quad h \in \mathcal{H}_+. 
\end{eqnarray*}
By the representer theorem \citep{Scholkopf2} we have $f_m(\bm{x}) = \sum_{i=1}^m a_i k_+(\bm{x},\bm{x}_i)$ where the coefficients $\mathbf{a} = [a_1,\dots,a_m]^\top$ are the solution of the linear system $\mathbf{K}_+ \mathbf{a} = \mathbf{f}_0$ where $\mathbf{K}_+ \in \mathbb{R}^{m \times m}$, $[\mathbf{K}_+]_{i,j} = k_+(\bm{x}_{i},\bm{x}_{j})$, $\mathbf{f}_0 \in \mathbb{R}^{m \times 1}$, $[\mathbf{f}_+]_i = f(\bm{x}_i)$.
In situations where $\mathbf{K}_+$ is not full-rank, we define $f_m \equiv 0$.
Numerical inversion of this system is associated with a $O(m^3)$ cost and may in practice require additional numerical regularisation; this is relatively standard.

\subsection{Theoretical Results} \label{consist approx asym}

Our novel analysis, next, builds on results from the scattered data approximation literature \citep{Wendland} and the study of the stability properties of Markov chains \citep{Meyn}.

\subsubsection{The Case of Independent Samples}

First we focus on scattered data approximation and state two assumptions that are central to our analysis:
\begin{enumerate}
\item[(A5)] $\pi > 0$ on $\mathcal{X} \cup \partial \mathcal{X}$
\item[(A6)] $f \in \mathcal{H}_+$. 
\end{enumerate}
Here (A5) extends (A2) in requiring also that $\pi>0$ on $\partial \mathcal{X}$.
(A6) ensures that the problem is well-posed.
Define the fill distance
$$
h_{\mathcal{D}_0} := \sup_{\bm{x} \in \mathcal{X}} \; \min_{i = 1,...,m} \|\bm{x} - \bm{x}_i\|_2.
$$
The proof strategy that we present here decomposes into two parts; (i) first, error bounds are obtained on the functional approximation error $\sigma^2(f - f_m)$ in terms of the fill distance $h_{\mathcal{D}_0}$, (ii) second, the fill distance $h_{\mathcal{D}_0}$ is shown to vanish under sampling (with high probability).
For (ii) to occur, we require an additional constraint on the geometry of $\mathcal{X}$:
\begin{enumerate}
\item[(A7)] The domain $\mathcal{X} \cup \partial \mathcal{X}$ satisfies an {\it interior cone condition}, i.e. there exists an angle $\theta \in (0,\pi/ 2)$ and a radius $r > 0$ such that for every $\bm{x} \in \mathcal{X} \cup \partial \mathcal{X}$ there exists a unit vector $\bm{\xi}$ such that the cone
$$
\mathcal{C}(\bm{x},\bm{\xi},\theta,r) := \{\bm{x} + \lambda\bm{y} \; : \; \bm{y} \in \mathbb{R}^d, \; \|\bm{y}\|_2 = 1, \; \bm{y}^\top\bm{\xi} \geq \cos\theta , \; \lambda \in [0,r]\}
$$
is contained in $\mathcal{X} \cup \partial \mathcal{X}$.
\end{enumerate}
The purpose of (A7) is to rule out the possibility of `pinch points' on $\partial\mathcal{X}$ (i.e. $\prec$-shaped regions), since intuitively sampling-based approaches can fail to `get into the corners' of the domain.
The limiting behaviour of the fill distance under sampling enters through the following technical result:
\begin{lemma} \label{tech lemma}
Let $g : [0,\infty) \rightarrow [0,\infty)$ be continuous, monotone increasing, and satisfy $g(0) = 0$ and $\lim_{x \downarrow 0} g(x) \exp(x^{-3d}) = \infty$.
Then under (A5,7) we have 
$$
\mathbb{E}_{\mathcal{D}_0} [g(h_{\mathcal{D}_0})] = O( g(m^{- \frac{1}{d} + \epsilon}) ), 
$$
where $\epsilon > 0$ can be arbitrarily small.
\end{lemma}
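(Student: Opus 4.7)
My approach follows the standard two-step strategy from scattered data approximation: first derive a high-probability upper bound on the fill distance $h_{\mathcal{D}_0}$, then integrate against $g$, using the growth hypothesis to absorb the tail contribution.

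The first task is the tail bound. Since $\mathcal{X} \cup \partial \mathcal{X}$ is compact and $\pi$ is continuous and strictly positive on it by (A5), the quantity $\pi_{\min} := \min \pi > 0$ is attained. The interior cone condition (A7) implies that for every $\bm{y} \in \mathcal{X} \cup \partial \mathcal{X}$ and every sufficiently small $r > 0$ the intersection $B(\bm{y},r) \cap \mathcal{X}$ contains a translated cone of Lebesgue measure at least proportional to $r^d$, yielding $\Pi(B(\bm{y},r) \cap \mathcal{X}) \geq c_0 r^d$. Covering $\mathcal{X} \cup \partial \mathcal{X}$ by $N(r) = O(r^{-d})$ closed balls of radius $r/2$ centred in the domain (possible since $\mathcal{X}$ is bounded), observing that $\{h_{\mathcal{D}_0} > r\}$ forces at least one such ball to be empty of samples, and applying a union bound to the i.i.d.\ sample $\mathcal{D}_0$ of size $m$ gives
\[
\mathbb{P}(h_{\mathcal{D}_0} > r) \;\leq\; C_1 r^{-d} (1 - c_0(r/2)^d)^m \;\leq\; C_1 r^{-d} \exp(-c_1 m r^d)
\]
for constants $C_1, c_1 > 0$ depending on $\mathcal{X}$, the cone parameters, and $\pi_{\min}$.

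With this tail bound in hand, I would split the expectation at a threshold $r_m := (K \log m / m)^{1/d}$ and note that $r_m \leq m^{-1/d+\epsilon}$ for any fixed $\epsilon > 0$ and all sufficiently large $m$ (since $\log m = o(m^{d\epsilon})$). Because $h_{\mathcal{D}_0} \leq D := \mathrm{diam}(\mathcal{X})$ almost surely and $g$ is monotone increasing,
\[
\mathbb{E}_{\mathcal{D}_0}[g(h_{\mathcal{D}_0})] \;\leq\; g(r_m) + g(D)\,\mathbb{P}(h_{\mathcal{D}_0} > r_m) \;\leq\; g\bigl(m^{-1/d+\epsilon}\bigr) + g(D)\,C_1\, m^{1 - c_1 K}/\log m.
\]
Choosing $K$ large makes the residual term decay at any prescribed polynomial rate $m^{-\nu}$. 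The hypothesis $\lim_{x\downarrow 0} g(x)\exp(x^{-3d}) = \infty$ forbids $g$ from decaying faster than $\exp(-x^{-3d})$ as $x\downarrow 0$, furnishing a lower envelope of the form $g(m^{-1/d+\epsilon}) \gtrsim \exp(-m^{3-3d\epsilon})$; this is what one uses to verify $g(D)\cdot m^{-\nu} = O(g(m^{-1/d+\epsilon}))$ and close the decomposition.

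The main obstacle, in my view, is precisely this last matching step: dominating a potentially super-exponentially small lower bound on $g(m^{-1/d+\epsilon})$ by a polynomial tail $m^{-\nu}$ is delicate. The straightforward union-bound tail $\exp(-c_1 m r^d)$ may be too crude near the critical scale, and the proof likely relies on (i) a finer choice of $r_m$, set so that the bound $\exp(-c_1 m r_m^d)$ and the envelope $\exp(-r_m^{-3d})$ balance, and (ii) verifying that this refined $r_m$ still lies below the target $m^{-1/d+\epsilon}$. The exponent $3d$ in the hypothesis appears to be pinned down by exactly this balancing calculation. Once the threshold is selected correctly, the two-term decomposition yields the claimed rate.
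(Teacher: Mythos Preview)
Your approach matches the paper's almost exactly: both obtain the tail bound $\mathbb{P}(h_{\mathcal{D}_0} > r) \leq C r^{-d}(1 - c r^d)^m$ by combining the interior cone condition (to lower-bound the $\Pi$-mass of small balls via $\pi_{\min}>0$) with a covering/union-bound argument, and both then split the expectation at a data-independent threshold. The paper uses an explicit reference grid of mesh $1/(M-1)$ rather than an abstract covering, and applies a reverse Markov inequality (after normalising so that $g\leq 1$) in place of your two-term decomposition, but these are cosmetic differences.

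Where your argument stalls is precisely where you flag it, and your proposed repair does not quite close the gap. Balancing $\exp(-c_1 m r_m^d)$ against the envelope $\exp(-r_m^{-3d})$ gives $m r_m^d \asymp r_m^{-3d}$, i.e.\ $r_m \asymp m^{-1/(4d)}$, which lies \emph{above} $m^{-1/d+\epsilon}$ for small $\epsilon$; your condition (ii) therefore fails, and the resulting bound would only be $O(g(m^{-1/(4d)}))$. The paper instead sets the threshold to $m^{-\delta}$ directly, with $\delta$ taken close to $1/d$, so that the first term is already $g(m^{-1/d+\epsilon})$; it then shows the tail-to-main ratio
\[
\frac{m^{d\delta}\exp(-\tilde{C}_d m^{1-d\delta})}{g(m^{-\delta})} \;=\; \frac{x^{-d}\exp(-\tilde{C}_d x^{d-1/\delta})}{g(x)}, \qquad x = m^{-\delta},
\]
tends to zero by comparing the exponent $1/\delta - d$ in $x$ against the $3d$ appearing in the growth hypothesis on $g$. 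In other words, the correct move is not to balance the two terms but to fix $r_m$ at the target scale and use the hypothesis to absorb the tail; your intuition that the exponent $3d$ is pinned down by this comparison is right, but it enters as a constraint on $\delta$ rather than through an equation for $r_m$.
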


Our first main result can now be stated:
\begin{theorem} \label{independent} 
Assume (A1,$\bar{2}$,3-7).
Recall that we partition the set $\mathcal{D}$ as $\mathcal{D}_0 \cup \mathcal{D}_1$ where $|\mathcal{D}_0| = m$ and $|\mathcal{D}_1| = n-m$.
There exists $h > 0$, independent of $m,n$, such that the estimator $I_{m,n}$ is an unbiased estimator of $\int f \d\Pi$ with 
$$
\mathbb{E}_{\mathcal{D}_0} \mathbb{E}_{\mathcal{D}_1}\left[ 1_{h_{\mathcal{D}_0} < h} \left(I_{m,n} - \int f \d\Pi\right)^2\right] = O\left( (n-m)^{-1} m^{-2 \frac{a \wedge b}{d} + \epsilon} \right) 
$$
where $\epsilon > 0$ can be arbitrarily small.
\end{theorem}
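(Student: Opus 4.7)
The plan is to reduce the mean squared error to a scattered data approximation bound on $\sigma^2(f - f_m)$ and then average over the randomness of $\mathcal{D}_0$ using Lemma~\ref{tech lemma}.

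\textbf{Step 1 (unbiasedness and conditional variance).} I would first decompose $f_m = \beta + \psi$ with $\beta \in \mathcal{H}_{\mathbb{R}}$ and $\psi \in \mathcal{H}_0$. Lemma~\ref{lem integrate to zero} gives $\int \psi \d\Pi = 0$, so $\int f_m \d\Pi = \beta$ and the parenthesised term in (\ref{splitting estimators}) equals $\psi(\bm{x}_i)$, which has zero $\Pi$-mean. Taking $\mathbb{E}_{\mathcal{D}_1}[\,\cdot \mid \mathcal{D}_0]$ then yields $\mathbb{E}_{\mathcal{D}_1}[I_{m,n} \mid \mathcal{D}_0] = \int f \d\Pi$, which marginalises to unbiasedness, and by the i.i.d.\ structure of $\mathcal{D}_1$ the conditional second moment of the error equals $(n-m)^{-1} \sigma^2(f - f_m)$.

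\textbf{Step 2 (deterministic approximation bound).} Next I would invoke power-function bounds for kernel interpolation in RKHSs \citep[Ch.~11]{Wendland}. Under (A7), with $k_+ \in C_2^{(a\wedge b)+1}$ by (A1--3), there exist constants $h>0$ and $C<\infty$, independent of $m$ and of $\mathcal{D}_0$, such that whenever $h_{\mathcal{D}_0} < h$,
$$\|f - f_m\|_\infty \;\le\; C\, h_{\mathcal{D}_0}^{\,a\wedge b} \, \|f\|_{\mathcal{H}_+}.$$
Combining with $\sigma^2(g) \le \|g\|_\infty^2$ and $\|f\|_{\mathcal{H}_+} < \infty$ from (A6) gives $1_{\{h_{\mathcal{D}_0}<h\}} \sigma^2(f - f_m) \le C^2 \|f\|_{\mathcal{H}_+}^2 \, h_{\mathcal{D}_0}^{2(a\wedge b)}$.

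\textbf{Step 3 (randomisation of the fill distance).} Chaining Steps 1 and 2,
$$\mathbb{E}_{\mathcal{D}_0}\mathbb{E}_{\mathcal{D}_1}\!\left[1_{\{h_{\mathcal{D}_0}<h\}}\bigl(I_{m,n}-\textstyle\int f \d\Pi\bigr)^2\right] \;\le\; \frac{C^2 \|f\|_{\mathcal{H}_+}^2}{n-m}\, \mathbb{E}_{\mathcal{D}_0}\!\left[h_{\mathcal{D}_0}^{\,2(a\wedge b)}\right].$$
Lemma~\ref{tech lemma} applied with $g(x) = x^{2(a\wedge b)}$ (whose tail hypothesis $g(x)\exp(x^{-3d}) \to \infty$ as $x \downarrow 0$ is immediate) yields $\mathbb{E}_{\mathcal{D}_0}[h_{\mathcal{D}_0}^{2(a\wedge b)}] = O(m^{-2(a\wedge b)/d + \epsilon})$, delivering the stated rate. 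The degenerate case $a \wedge b = 0$ is handled by using the trivial bound $\sigma^2(f - f_m) \le 4\|f\|_\infty^2$ together with Lemma~\ref{tech lemma} applied to a dominating power $g(x) = x^\eta$ with $\eta>0$ arbitrarily small.

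\textbf{Main obstacle.} I expect the most delicate step to be the scattered data bound in Step~2, since the classical theory of \citep{Wendland} applies most cleanly to translation-invariant kernels on standard domains, whereas $k_+$ arises through the Stein operator composed with the boundary cut-off $k(\bm{x},\bm{x}')=\delta(\bm{x})\delta(\bm{x}')\tilde{k}(\bm{x},\bm{x}')$ introduced in Section~\ref{reproducing kernel Hilbert space construct}. One must check that the native space of $k_+$ embeds into a Sobolev-type space of smoothness $a\wedge b$ with a norm equivalence that permits the power-function estimate, and that the threshold $h$ can be chosen uniformly in $\mathcal{D}_0$ from the cone parameters alone. Verifying native-space compatibility near $\partial\mathcal{X}$, where $\delta$ vanishes, is where the analysis is most likely to require care.
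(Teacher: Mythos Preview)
Your proposal is correct and follows essentially the same route as the paper: the paper also reduces to bounding $\sigma^2(f-f_m)$ by $\|f-f_m\|_\infty^2$, invokes Theorem~11.13 of \cite{Wendland} to obtain $\|f-f_m\|_\infty \le C\,h_{\mathcal{D}_0}^{a\wedge b}\|f\|_{\mathcal{H}_+}$ on the event $h_{\mathcal{D}_0}<h$, and then applies Lemma~\ref{tech lemma} with $g(x)=x^{2(a\wedge b)}$. One small discrepancy: the paper records $k_+ \in C_2^{a\wedge b}(\mathcal{X}\cup\partial\mathcal{X})$ rather than $C_2^{(a\wedge b)+1}$, and it does not attempt the native-space/Sobolev verification you flag as the main obstacle---it simply cites Wendland's result at face value---so your concern there goes beyond what the paper itself establishes.
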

\noindent Thus for $m = O(n)$, this result establishes an overall error of $O(n^{-1 - 2\frac{a \wedge b}{d} + \epsilon})$, as claimed.
This establishes that these estimates are more efficient than standard Monte Carlo estimators when $a \wedge b > 0$.
Or, when the cost of solving a linear system is taken into account, the method is more efficient on a per-cost basis when $a \wedge b > d$.
This provides new insight into the first set of empirical results reported in \cite{Oates} where, for assessment purposes, samples were generated independently from known, smooth densities. There, control functionals were constructed based on smooth kernels and integration errors were shown to be substantially reduced.

On the negative side, this result illustrates a curse of dimension that appears to be intrinsic to the method.
We return to this point in Sec. \ref{discuss}.

The results above hold for independent samples, yet the main area of application for control functionals is estimation based on the MCMC output.
In the next section we prove that the assumption of independence can be relaxed.

\subsubsection{The Case of Non-Independent Samples} \label{MCMC extension}

In practice, samples from posterior distributions are often obtained via MCMC methods.
Our analysis must therefore be extended to the non-independent setting:
Consider the case where $\{\bm{x}_i\}_{i=1}^n$ are generated by a reversible Markov chain targeting $\Pi$.
We make the following stochastic stability assumption:
\begin{enumerate}
\item[(A8)] The Markov chain is uniformly ergodic.
\end{enumerate}
\noindent Then our first step is to extend Lemma \ref{tech lemma} to the non-independent setting:
\begin{lemma} \label{tech lemma 2}
The conclusion of Lemma \ref{tech lemma} holds when $\{\bm{x}_i\}_{i=1}^n$ are generated via MCMC, subject to (A8).
\end{lemma}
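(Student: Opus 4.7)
My plan is to emulate the proof of Lemma \ref{tech lemma}, replacing independence with uniform ergodicity only at the point where a single-ball hitting estimate is needed. The proof of Lemma \ref{tech lemma} presumably proceeds by first establishing, under (A5,7), a tail bound of the form $\Pr(h_{\mathcal{D}_0} > h) \leq C h^{-d} \exp(-c m h^d)$ and then integrating against $g$; my task is to produce an analogue of this tail bound in the MCMC setting that agrees with the iid one up to a logarithmic factor in the exponent, which will be absorbed in the arbitrary $\epsilon$.

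First I would fix a minimal $(h/2)$-net $\{\bm{y}_j\}_{j=1}^{N_h}$ of $\mathcal{X}\cup\partial\mathcal{X}$ with $N_h = O(h^{-d})$, so that the event $\{h_{\mathcal{D}_0} > h\}$ forces one of the sets $B_j := B(\bm{y}_j,h/2)\cap\mathcal{X}$ to contain no sample. The cone condition (A7) ensures $\Lambda(B_j) \geq c_1 h^d$ uniformly, even for centers on $\partial\mathcal{X}$, and continuity plus (A5) yields $\pi_{\min} := \inf_{\mathcal{X}\cup\partial\mathcal{X}} \pi > 0$, so $\Pi(B_j) \geq c_0 h^d$ uniformly in $j$. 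By (A8) there exist $M<\infty$ and $\rho\in(0,1)$ with $\sup_{\bm{x}}\|P^n(\bm{x},\cdot)-\Pi\|_{\mathrm{TV}} \leq M\rho^n$; choosing $n_h := \lceil \log_{1/\rho}(2M/(c_0 h^d)) \rceil = O(\log(1/h))$ forces $P^{n_h}(\bm{x},B_j) \geq c_0 h^d/2$ uniformly in $\bm{x}$ and $j$. Thinning to $\{\bm{x}_{k n_h}\}_{k=1}^K$ with $K = \lfloor m/n_h \rfloor$ and iterating the Markov property,
$$
\Pr(\bm{x}_i \notin B_j \text{ for all } i \leq m) \leq (1-c_0 h^d/2)^K \leq \exp\Bigl(-\tfrac{c_2 m h^d}{\log(1/h)}\Bigr),
$$
and a union bound over the $O(h^{-d})$ indices $j$ produces $\Pr(h_{\mathcal{D}_0} > h) \leq C h^{-d} \exp(-c_2 m h^d / \log(1/h))$.

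Plugging this tail bound into the integration step of Lemma \ref{tech lemma} closes the argument: the growth condition $\lim_{x\downarrow 0} g(x)\exp(x^{-3d}) = \infty$ leaves enough slack that replacing $\exp(-cmh^d)$ by $\exp(-c_2 m h^d / \log(1/h))$ costs at most a polylogarithmic factor in $m$, which is dominated by $m^\epsilon$. The main obstacle is verifying that uniform ergodicity is precisely the right stability hypothesis, because it supplies a bound on $\|P^n(\bm{x},\cdot)-\Pi\|_{\mathrm{TV}}$ that is simultaneously exponential in $n$ and uniform in the starting state; under merely geometric ergodicity the hitting-time bound would depend on a drift function and the constants $c_2$, $n_h$ would no longer be independent of the chain's history, and the union bound over $j$ would fail to close.
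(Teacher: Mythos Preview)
Your approach is correct and follows the same overall template as the paper: isolate the one place in the proof of Lemma~\ref{tech lemma} where independence is used (the single-ball avoidance probability) and replace it with a uniform-ergodicity estimate. The difference lies in \emph{which} characterisation of uniform ergodicity you invoke.

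You work from the total-variation bound $\sup_{\bm{x}}\|P^n(\bm{x},\cdot)-\Pi\|_{\mathrm{TV}}\leq M\rho^n$ and select a thinning length $n_h = O(\log(1/h))$ that depends on the ball radius, so that $P^{n_h}(\bm{x},B_j)\geq c_0 h^d/2$. This produces a hitting bound $\exp(-c_2 m h^d/\log(1/h))$ with an extra logarithmic factor in the exponent, which you then (correctly) argue is swallowed by the $\epsilon$ slack in the conclusion. The paper instead appeals to \citet{Roberts2} to upgrade uniform ergodicity to \emph{strong} uniform ergodicity: a single minorisation $P^N(\bm{x},\cdot)\geq\upsilon\,\Pi(\cdot)$ with $N$ and $\upsilon$ \emph{fixed}, independent of the target set. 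This gives directly
\[
\mathbb{P}_{\mathcal{D}_0}\!\left[\forall j,\ \|\bm{g}_i-\bm{x}_j\|_2>\tfrac{1}{M-1}\right]\ \leq\ (1-\upsilon V\eta)^{\lfloor m/N\rfloor},
\]
which is identical in form to the iid bound up to the constant factor $\upsilon/N$. The proof of Lemma~\ref{tech lemma} then carries over \emph{verbatim} with $\tilde{C}_d$ replaced by $\upsilon N^{-1}\tilde{C}_d$, and no logarithmic correction ever appears.

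So both routes land in the same place; the paper's minorisation argument is simply cleaner because the block length does not depend on $h$, sparing you the bookkeeping of tracking and absorbing the $\log(1/h)$ term. Your closing remark about geometric versus uniform ergodicity is apt and matches the spirit of the paper's reliance on (A8).
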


Non-independence presents us with the possibility that two of the states $\bm{x}_i,\bm{x}_j \in \mathcal{D}_0$ are identical (for instance, when a Metropolis-Hastings sample is used and a rejection occurs).
Under our current definition, such an event would cause the kernel matrix $\mathbf{K}_+$ to become singular and the control functional to become trivial $f_m = 0$.
It is thus necessary to modify the construction.
Specifically, we assume that $\mathcal{D}_0$ has been pre-filtered such that any repeated states have been removed.
Note that this does not `introduce bias', since we are only pre-filtering $\mathcal{D}_0$, not $\mathcal{D}_1$.
This reduces the effective number $m$ of points in $\mathcal{D}_0$ by at most a constant factor and has no impact on the asymptotics.

With this technical point safely surmounted, we present our second main result:
\begin{theorem} \label{dependent}
The conclusion of Theorem \ref{independent} holds when $\{\bm{x}_i\}_{i=1}^n$ are generated via MCMC, subject to (A8).
\end{theorem}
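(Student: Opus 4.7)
\vspace{5pt}
\noindent \textbf{Proof plan for Theorem \ref{dependent}.}

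The strategy is to re-run the argument used for Theorem \ref{independent}, pinpointing the two places where independence was invoked and substituting Markov-chain-aware replacements under (A8). First, rewrite the estimator in centred form
\begin{eqnarray*}
I_{m,n} - \int f \d\Pi \; = \; \frac{1}{n-m}\sum_{i=m+1}^n \bar{g}_m(\bm{x}_i), \qquad \bar{g}_m := (f - f_m) - \int (f-f_m) \d\Pi ,
\end{eqnarray*}
noting that $\bar{g}_m$ is $\mathcal{D}_0$-measurable, has zero $\Pi$-mean, and satisfies $\|\bar{g}_m\|_\infty \leq 2\|f - f_m\|_\infty$. Conditioning on $\mathcal{D}_0$, the states $\bm{x}_{m+1},\dots,\bm{x}_n$ form a trajectory of the Markov chain started from the fixed state $\bm{x}_m$; squaring and expanding, the conditional MSE is a sum of lagged covariances $\mathbb{E}[\bar{g}_m(\bm{x}_i)\bar{g}_m(\bm{x}_j) \mid \mathcal{D}_0]$.

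By uniform ergodicity (A8) there exist constants $M \geq 1$ and $\rho \in (0,1)$, independent of the starting state, such that $\|P^k(\bm{x},\cdot) - \Pi\|_{\mathrm{TV}} \leq M\rho^k$; a standard duality argument then bounds each lagged covariance by $C\rho^{|i-j|}\|\bar{g}_m\|_\infty^2$, and summing the resulting geometric series yields
\begin{eqnarray*}
\mathbb{E}_{\mathcal{D}_1}\!\left[\left(I_{m,n} - \int f \d\Pi\right)^2 \; \bigg| \; \mathcal{D}_0\right] \; \leq \; C' (n-m)^{-1}\|f - f_m\|_\infty^2 ,
\end{eqnarray*}
where $C'$ depends only on $M,\rho$. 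With $\mathcal{D}_0$ pre-filtered to remove repeated states, the scattered-data bound used in the proof of Theorem \ref{independent} applies verbatim on the event $\{h_{\mathcal{D}_0} < h\}$, giving $\|f - f_m\|_\infty \leq C h_{\mathcal{D}_0}^{a \wedge b} \|f\|_{\mathcal{H}_+}$; taking outer expectation and invoking Lemma \ref{tech lemma 2} with $g(x) = x^{2(a \wedge b)}$ then produces the advertised $O((n-m)^{-1} m^{-2(a \wedge b)/d + \epsilon})$ bound.

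The main obstacle I anticipate is the conditional MSE bound, because $\bar{g}_m$ depends on $\mathcal{D}_0$ and, through $\bm{x}_m$, on the starting state for the continuation of the chain; care is needed to ensure the covariance estimate is uniform in both. Uniform ergodicity is precisely what makes this decoupling possible, since the mixing constants $M,\rho$ do not depend on the starting state. A secondary issue is unbiasedness: strictly, $I_{m,n}$ is only unbiased under stationary initialisation, but any transient bias is $O(\rho^m)$ and is therefore negligible relative to the polynomial MSE rate. Finally, pre-filtering $\mathcal{D}_0$ is needed to guarantee invertibility of $\mathbf{K}_+$ and, as noted in the paragraph preceding the theorem, reduces $m$ by at most a constant factor, leaving the asymptotics unchanged.
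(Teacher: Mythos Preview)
Your proof is correct and follows the same three-step architecture as the paper: (i) replace the i.i.d.\ variance identity by a Markov-chain variance bound of the form $(n-m)^{-1}$ times a norm of $f-f_m$; (ii) invoke the scattered-data estimate $\|f-f_m\|_\infty \leq C h_{\mathcal{D}_0}^{a \wedge b}\|f\|_{\mathcal{H}_+}$ on $\{h_{\mathcal{D}_0}<h\}$; (iii) control $\mathbb{E}_{\mathcal{D}_0}[h_{\mathcal{D}_0}^{2(a\wedge b)}]$ via Lemma~\ref{tech lemma 2}.

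The one genuine difference is in step (i). The paper does not sum lagged covariances directly; instead it cites the \emph{variance bounding} property of reversible, geometrically ergodic chains (Roberts and Rosenthal, 2008), which yields $\lim_{n}(n-m)\,\mathbb{E}_{\mathcal{D}_1}[(I_{m,n}-\int f\,\d\Pi)^2] \leq K\,\sigma^2(f-f_m)$ for a universal constant $K$, and then bounds $\sigma^2(f-f_m)$ as in Theorem~\ref{independent}. Your route---bounding each covariance by $C\rho^{|i-j|}\|\bar g_m\|_\infty^2$ via the total-variation definition of uniform ergodicity and summing the geometric series---is more elementary and self-contained, and it handles the conditioning on $\mathcal{D}_0$ (hence the non-stationary start at $\bm{x}_m$) explicitly, whereas the paper assumes a stationary start and defers the general initial condition to ``standard renewal theory''. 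The paper's route is shorter and, because it lands on $\sigma^2(f-f_m)$ rather than $\|f-f_m\|_\infty^2$, is in principle slightly sharper, though this gain is immediately discarded once the sup-norm scattered-data bound is applied. Your remarks on unbiasedness and on pre-filtering $\mathcal{D}_0$ match the paper's treatment.
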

\noindent This result again demonstrates that control functionals are more cost-efficient than standard Monte Carlo when $a \wedge b > d$ and that efficiency is limited by the rougher of the density $\pi$ and the test function $f$.
This helps to explain the second set of empirical results obtained in \cite{Oates}, where excellent performance was reported on problems that involved smooth densities, smooth kernels and MCMC sampling methods.
On the other hand, we again observe a curse of dimension that is inherent to control functionals and, indeed, control variates in general.

\subsection{Commentary} \label{remarks}

Several points of discussion are covered below, on the appropriateness of the assumptions, the strength of the results and aspects of implementation.

\vspace{5pt}
\noindent {\bf On the Assumptions:}
Assumptions (A1,$\bar{2}$,3,7) are not unduly restrictive.
The boundary condition (A4) has previously been discussed in \cite{Oates}.
Below we discuss the remaining assumptions, (A5,6,8).

Our entire analysis was predicated on (A5), the assumption that $\pi$ is bounded away from 0 on the compact set $\mathcal{X} \cup \partial\mathcal{X}$.
This ensured that $\pi$ was equivalent to Lebesgue measure on $\mathcal{X} \cup \partial \mathcal{X}$ and enabled this change of measure in the proofs.
This is clearly a restrictive set-up as certain distributions of interest do vanish, however the assumption was intrinsic to our theoretical approach.

Our analysis also relied on (A6), i.e. that $f$ belongs to the function space $\mathcal{H}_+$.
It it is thus natural to examine this assumption in more detail.
To this end, we provide the following lemma.
Recall that a RKHS $\mathcal{H}$ is \emph{$c$-universal} if it is dense as a set in $(C^0(\mathcal{X} \cup \partial \mathcal{X}),\|\cdot\|_\infty)$.
\begin{lemma} \label{characteristic}
Assume (A$\bar{2}$,3,4).
If $\mathcal{H}$ is c-universal then $\mathcal{H}_+$ is dense as a set in $(L^2(\mathcal{X} \cup \partial \mathcal{X},\Pi),\|\cdot\|_2)$.
\end{lemma}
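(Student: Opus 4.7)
The plan is to apply Hilbert-space duality. Since $L^2(\mathcal{X}\cup\partial\mathcal{X},\Pi)$ is a Hilbert space and $\mathcal{H}_+$ is a linear subspace, density is equivalent to triviality of the $L^2(\Pi)$-orthogonal complement. I therefore fix an arbitrary $g\in L^2(\Pi)$ with $\int gh\,\d\Pi=0$ for every $h\in\mathcal{H}_+=\mathcal{H}_{\mathbb{R}}+\mathcal{H}_0$ and aim to show $g=0$ $\Pi$-a.e. Testing against $1\in\mathcal{H}_{\mathbb{R}}$ immediately gives the mean-zero condition $\int g\,\d\Pi=0$, while testing against $\mathbb{S}_\pi[\bm{\phi}]\in\mathcal{H}_0$ for each $\bm{\phi}\in\mathcal{H}^d$ gives $\int g\,\mathbb{S}_\pi[\bm{\phi}]\,\d\Pi=0$.

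Next I convert the Stein-orthogonality into a statement about the derivatives of $g$. The product-rule identity $\pi\,\mathbb{S}_\pi[\bm{\phi}]=\nabla\cdot(\pi\bm{\phi})$ rewrites the second condition as $\int g\,\nabla\cdot(\pi\bm{\phi})\,\d\bm{x}=0$, and the specialisation $\bm{\phi}=\phi\,\bm{e}_i$ yields, for every $\phi\in\mathcal{H}$ and each coordinate $i$, $\int g\,\partial_i(\pi\phi)\,\d\bm{x}=0$. The reproducing-kernel identity $\pi(\bm{x})\phi(\bm{x})=\langle\phi,\pi(\bm{x})k(\cdot,\bm{x})\rangle_{\mathcal{H}}$ combined with (A4) shows $\pi\phi$ vanishes on $\partial\mathcal{X}$, so a distributional integration by parts on $\mathcal{X}$ generates no boundary contribution. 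Using $g\pi\in L^1(\mathcal{X})$ (Cauchy--Schwarz) to make sense of the distributional derivative $\partial_i(g\pi)$, and cancelling the $g\,\partial_i\pi$ terms that appear on both sides of the by-parts identity, the condition reduces to $\langle \pi\,\partial_i g,\phi\rangle_{\mathcal{D}'(\mathcal{X}),\mathcal{D}(\mathcal{X})}=0$ for every $\phi\in\mathcal{H}$ and each $i$.

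The final step invokes c-universality. Mollifying $g$ on compact subsets of $\mathcal{X}$ to $g^{\varepsilon}:=g\ast\eta_{\varepsilon}$ makes $\pi\,\partial_i g^{\varepsilon}$ a genuine $L^2_{\mathrm{loc}}(\mathcal{X})$ function, and carrying the identity through the mollification gives $\pi\,\partial_i g^{\varepsilon}\perp\mathcal{H}$ in $L^2_{\mathrm{loc}}(\mathcal{X})$ in the classical sense. Sup-norm density of $\mathcal{H}$ in $C^0(\mathcal{X}\cup\partial\mathcal{X})$ (hence in $L^2_{\mathrm{loc}}(\mathcal{X})$ on the compact set) then forces $\pi\,\partial_i g^{\varepsilon}=0$ a.e., and sending $\varepsilon\downarrow 0$ recovers $\pi\,\partial_i g=0$ in $\mathcal{D}'(\mathcal{X})$; since $\pi>0$ on the open set $\mathcal{X}$ by (A$\bar{2}$), we conclude $\partial_i g=0$ for each $i$, so $g$ is locally constant on $\mathcal{X}$. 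The zero-mean condition then forces $g\equiv 0$ $\Pi$-a.e.

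The main obstacle is precisely the passage from c-universality (which supplies only sup-norm density in $C^0$) to the distributional conclusion $\partial_i g=0$: in its naive form the orthogonality $\int g\,\partial_i(\pi\phi)\,\d\bm{x}=0$ involves first derivatives of the approximant and is not continuous with respect to sup-norm convergence on $\mathcal{X}\cup\partial\mathcal{X}$. The resolution, carried out above, is to transfer those derivatives off $\pi\phi$ and onto $g$ by a distributional integration by parts—with (A4) crucially eliminating the boundary term—so that the resulting pairing involves only $\phi$ itself. A mollification of $g$ justifies the transfer rigorously in the $L^2(\Pi)$-only regularity setting, while the positivity $\pi>0$ on $\mathcal{X}$ is used at the very end to strip off the factor of $\pi$ and pass to $\partial_i g=0$.
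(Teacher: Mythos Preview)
Your argument has a genuine gap at the mollification step. You claim that from $\int g\,\partial_i(\pi\phi)\,\mathrm{d}\bm{x}=0$ for all $\phi\in\mathcal{H}$ one can ``carry the identity through the mollification'' to obtain $\pi\,\partial_i g^{\varepsilon}\perp\mathcal{H}$ in $L^2_{\mathrm{loc}}$. But mollification of $g$ does not preserve this orthogonality: $\int g^{\varepsilon}\,\partial_i(\pi\phi)\,\mathrm{d}\bm{x}$ equals $\int g\,[\partial_i(\pi\phi)*\check\eta_{\varepsilon}]\,\mathrm{d}\bm{x}$, and $\partial_i(\pi\phi)*\check\eta_{\varepsilon}$ is not of the form $\partial_i(\pi\tilde\phi)$ for any $\tilde\phi\in\mathcal{H}$, since convolution does not commute with multiplication by $\pi$. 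The preceding line ``$\langle\pi\,\partial_i g,\phi\rangle_{\mathcal{D}'(\mathcal{X}),\mathcal{D}(\mathcal{X})}=0$ for every $\phi\in\mathcal{H}$'' is likewise ill-posed, because $\phi\in\mathcal{H}$ is not compactly supported in $\mathcal{X}$ and so is not a test function. The underlying obstruction is structural: c-universality supplies only $C^0$ density of $\mathcal{H}$, whereas your orthogonality condition involves one derivative of $\phi$, and you cannot legitimately transfer that derivative onto $g$ without first knowing that $g$ is (weakly) differentiable.

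The remedy is the reduction the paper itself performs: it suffices to show that $\mathcal{H}_+$ is $\|\cdot\|_2$-dense in $C^1(\mathcal{X}\cup\partial\mathcal{X})$, since $C^1$ is already dense in $L^2(\Pi)$. With the orthogonal element $g\in C^1$, your integration by parts becomes classical (the boundary term vanishes by (A4) together with boundedness of $g$), giving $\int(\partial_i g)\,\pi\,\phi\,\mathrm{d}\bm{x}=0$ for all $\phi\in\mathcal{H}$ and each $i$; then c-universality forces $(\partial_i g)\pi=0$ as a measure, hence $\partial_i g=0$ on $\mathcal{X}$. This coordinate-wise route is in fact more direct than the paper's, which after the same $C^1$ reduction instead constructs a perturbed density $q=(c+g)\pi/c$ and argues via a Stein-discrepancy functional $T(q,\pi,k)$ and injectivity of the kernel mean embedding. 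Finally, note that your last step ``locally constant and mean zero $\Rightarrow g\equiv 0$'' tacitly uses connectedness of $\mathcal{X}$.
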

\noindent The notion of $c$-universality was introduced by \cite{Steinwart3}, who showed that many widely-used kernels are $c$-universal on compact sets.
Indeed, Prop. 1 of \cite{Micchelli} proves that a RKHS with kernel $k$ is c-universal if and only if the map $\Pi' \mapsto \Pi'[k(\cdot,\cdot)]$, from the space of finite signed Borel measures $\Pi'$ to the RKHS $\mathcal{H}$, is injective, which is a weak requirement.
It is \emph{not}, however, clear whether (A4), (A5) can both hold when $k$ is also $c$-universal.
Further work will therefore be required to better assess the consequences of $f \notin \mathcal{H}_+$.
This might proceed in a similar vein to the related work of \cite{Narcowich,Kanagawa}.

The last assumption to discuss is (A8); uniform ergodicity of the Markov chain.
Since $\pi$ is absolutely continuous with respect to Lebesgue measure on $\mathcal{X} \cup \partial \mathcal{X}$, in practice any Markov chain that targets $\Pi$ will typically be uniformly ergodic.
Indeed, \cite{Roberts3} constructed an example where a pinch point in the domain caused a Gibbs sampler targeting a uniform distribution to fail to be geometrically ergodic; their construction violates our (A7).

\vspace{5pt}
\noindent {\bf On the Results:}
The intuition for the results in Thms. \ref{independent} and \ref{dependent} can be described as `accurate estimation with high probability', since the condition $h_{\mathcal{D}_0} < h$ is satisfied when the samples $\mathcal{D}_0$ cover the state space $\mathcal{X}$, which occurs with unit probability in the $m \rightarrow \infty$ limit.
There are two equivalent statements that can be made unconditionally on $h_{\mathcal{D}_0} < h$:
(i) Firstly, one can simply re-define $f_m = 0$ whenever $h \geq h_0$, i.e. when the states $\mathcal{D}_0$ are poorly spaced we revert to the usual Monte Carlo estimator.
(ii) Secondly, one could augment $\mathcal{D}_0$ with additional fixed states, such as a grid, $\{\bm{g}_i\}_{i=1}^G$, to ensure that $h_{\mathcal{D}_0} < h$ is automatically satisfied.
However, we find both of these equivalent approaches to be less aesthetically pleasing, since in practice this requires that $h$ be explicitly computed.

The condition $h_{\mathcal{D}_0} < h$ suggests that the asymptotics hold in the same regime where QMC methods could also be successful.
However, as explained in Sec. \ref{intro}, the method of \cite{Oates} carries some advantages over the QMC approach that could be important.
First, it provides unbiased estimation of $\int f \d\Pi$, which enables straight-forward empirical assessment.
Second, the fact that it is based on MCMC output renders it more convenient to implement.

On the sharpness of our results, we refer to Sec. 11.7 of \cite{Wendland} where an overview of the strengths and weaknesses of results in the scattered data approximation literature is provided.

\vspace{5pt}
\noindent {\bf On the Data-Split:}
It is required to partition samples into sets $\mathcal{D}_0$ and $\mathcal{D}_1$, whose sizes must be specified.
Substituting $\rho = m/n$ into the conclusion of Thm. \ref{independent} and minimising this expression over $\rho \in (0,1]$ leads to an optimal value 
\begin{eqnarray}
\rho^* & = & \frac{\nu}{1 + \nu} \hspace{20pt} \text{where} \hspace{20pt} \nu = 2 \frac{a \wedge b}{d}.
\end{eqnarray}
Thus, when $a \wedge b \gg d$ we have $\rho^* \approx 1$ and the optimal method is essentially a numerical quadrature method (i.e. all samples assigned to $\mathcal{D}_0$).
Conversely, when $a \wedge b \ll d$ we have $\rho^* \approx 0$ and the optimal method becomes a Monte Carlo method (i.e. all samples assigned to $\mathcal{D}_1$).

\vspace{5pt}
\noindent {\bf On the Bandwidth:}
For the experiments reported next we considered radial kernels of the form 
$$
\tilde{k}(\bm{x},\bm{x}') \; = \; \varphi\left(\frac{\|\bm{x} - \bm{x}'\|_2}{h}\right)
$$ 
where $h > 0$ is a bandwidth parameter and $\varphi$ is a radial basis function, to be specified.
An appropriate value for the bandwidth $h$ must therefore be selected.
An important consideration is that if $h$ is selected based on $\mathcal{D}_0$ but not on $\mathcal{D}_1$ then the estimator $I_{m,n}$ remains unbiased.
To this end, we propose to select $h$ via maximisation of the log-marginal likelihood
\begin{eqnarray*}
\log p(\mathbf{f}_0 | \mathcal{D}_0 , h) & = & - \frac{1}{2} \mathbf{f}_0^\top \mathbf{K}_+^{-1} \mathbf{f}_0 - \frac{1}{2} \log |\mathbf{K}_+| - \frac{m}{2} \log 2 \pi
\end{eqnarray*}
which arises from the duality with Gaussian processes and approximation in RKHS \citep[see e.g.][]{Berlinet}.

\vspace{5pt}
\noindent {\bf On an Extension:}
An extension of the estimation method was also considered. 
Namely, for each $i$ one can build an approximation $f^{(-i)} \in \mathcal{H}_+$ to be used as a control functional for $f(\bm{x}_i)$, based on $\mathcal{D} \setminus \{\bm{x}_i\}$.
This results in a leave-one-out (LOO) estimator
\begin{eqnarray}
I_n := \frac{1}{n} \sum_{i=1}^n f(\bm{x}_i) - \left( f^{(-i)}(\bm{x}_i) - \int f^{(-i)} \d \Pi \right)
\end{eqnarray}
that again remains unbiased.
The performance of $I_n$ can be expected to compare favourably with that of $I_{m,n}$, but the computational cost of $I_n$ is larger at $O(n^4)$.

\vspace{5pt}
\noindent {\bf On Computation:}
It is important to emphasise the ease with which these estimators can be implemented.
In the $c \rightarrow \infty$ limit, explicit evaluation of Eqn. \ref{splitting estimators} is particularly straight-forward:
\begin{eqnarray}
I_{m,n} = \frac{1}{n-m} \bm{1}^\top \left\{ \mathbf{f}_1 - \mathbf{K}_{10} \mathbf{K}_0^{-1} \left[ \mathbf{f}_0 - \left( \frac{\bm{1}^\top \mathbf{K}_0^{-1} \mathbf{f}_0 }{\bm{1}^\top \mathbf{K}_0^{-1} \bm{1} } \right) \bm{1} \right] \right\}  \label{matrix form}
\end{eqnarray}
where $\mathbf{f}_1 \in \mathbb{R}^{n-m \times 1}$, $[\mathbf{f}_1]_i = f(\bm{x}_{m+i})$, $\mathbf{K}_0 \in \mathbb{R}^{m \times m}$, $[\mathbf{K}_0]_{i,j} = k_0(\bm{x}_i,\bm{x}_j)$, $\mathbf{K}_{10} \in \mathbb{R}^{n-m \times m}$ and $[\mathbf{K}_{10}]_{i,j} = k_0(\bm{x}_{m+i},\bm{x}_{j})$.
An implementation called \verb+control_func.m+ is available on the Matlab File Exchange to download.

\section{Numerical Results} \label{illustration}

First, in Sec. \ref{sec: converge check}, we assessed whether the theoretical results are borne out in simulation experiments.
Then, in Sec. \ref{sec: application}, we applied the method to a topical parameter estimation problem in uncertainty quantification for a groundwater flow model.

\subsection{Simulation} \label{sec: converge check}

To construct a test-bed for the theoretical results we considered the simple case where $\Pi$ is the uniform distribution on $\mathcal{X} = [0,1]^d$.
The test functions that we considered took the form $f(\bm{x}) = 1 + \sin(2 \pi \omega x_1)$ where $\omega$ was varied to create a problem that was either `easy' ($\omega = 1$) or `hard' ($\omega = 3$).
The importance of the first coordinate $x_1$ aimed to reflect the `low effective dimension' phenomena that is often encountered.
From symmetry of the integrand, the true integral is 1.

For estimation we took the radial basis function $\varphi$ to have variable smoothness and compact support, as studied in \cite{Wendland2}.
Explicit formulae for the $\varphi$ and their derivatives are contained in the electronic supplement.
To enforce (A4) we took $\delta(\bm{x}) = \prod_{i=1}^d x_i (1 - x_i)$ which vanishes on $\partial \mathcal{X}$.
The data-split fraction $\rho$ and the bandwidth $h$ were each optimised as described in Sec. \ref{remarks}.
Optimisation for $h$ was performed through 10 iterations of the Matlab function \verb+fminbnd+ constrained to $h \in [0,10]$.

Three estimators were considered; the standard Monte Carlo estimator, the control functional (CF) estimator $I_{m,n}$ in Eqn. \ref{matrix form} and the LOO estimator $I_n$. In the case of the LOO estimator, the bandwidth $h$ was re-optimised in building each of the $n$ control functionals $f^{(-i)}$.
(A1,$\bar{2}$,3-5,7) were satisfied in this experiment.
Thus, for $f \in \mathcal{H}_+$, Thm. \ref{independent} entails a mean squared integration error for $I_{m,n}$ of $O(n^{-1-2\frac{b}{d} + \epsilon})$, since $\pi(\bm{x}) = 1 \in C^{a+1}$ for all $a \in \mathbb{N}_0$.
However, the theoretical analysis does not take into account automatic selection of the bandwidth $h$; this will be assessed through experiment.

\begin{figure}[t!]
\centering
\includegraphics[width = \textwidth,clip,trim = 0.5cm 8.5cm 1cm 7.5cm]{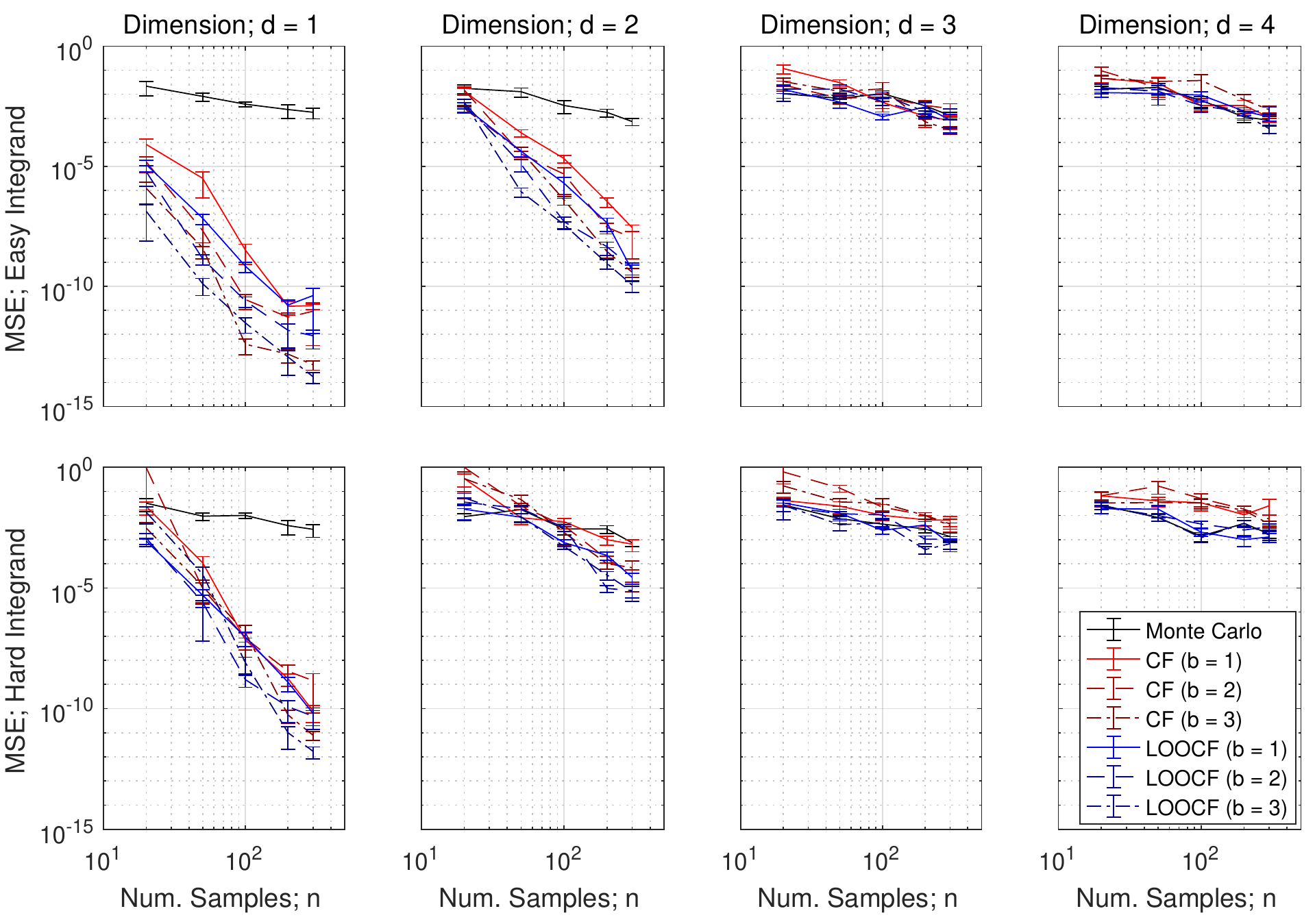}
\caption{
Simulation results; the case of independent samples.
An `easy' and a `hard' integrand were considered.
The mean square error (MSE) was estimated for the standard Monte Carlo estimator, the control functional (CF) estimator $I_{m,n}$ and the leave-one-out (LOOCF) estimator $I_n$, and plotted against the number $n$ of samples used.
The CF and LOOCF estimators were based on kernels of smoothness $b \in \{1,2,3\}$.
Standard errors are also displayed.}
\label{sim results}
\end{figure}

\vspace{5pt}
\noindent {\bf Independent Samples:}
To study estimator performance, we repeatedly generated collections of $n$ independent uniform random variables $\{\bm{x}_i\}_{i=1}^n$ and evaluated all three estimators on this set.
The procedure was repeated several times to obtain estimates (along with standard errors) for the average mean square errors (MSE) that were incurred.
Results are displayed in Fig. \ref{sim results}.
In these experiments the MSE appeared to decrease at least as rapidly as the rates that were predicted.
Also, as predicted, the estimator performance quickly deteriorated as the dimension $d$ was increased.
Indeed, for $d = 3,4$ an improvement over standard Monte Carlo was no longer observed.
The LOO estimator $I_n$ in general out-performed the CF estimator $I_{m,n}$, as expected, but at an increased computational cost.
The integration error was in general larger for the hard integrand.

\vspace{5pt}
\noindent {\bf Dependent Samples:}
The effect of correlation among the $\bm{x}_i$ was also explored.
For this, we considered a random walk $\bm{x}_i = \bm{x}_{i-1} + \bm{e}_i$ on the $d$-torus with $\{\bm{e}_i\}_{i=1}^n$ drawn uniformly on $[-\epsilon,\epsilon]^d$ and $\bm{x}_0 = \bm{0}$.
This is a Markov chain with invariant distribution $\Pi$.
The objective was to assess estimator performance as a function of the step size parameter $\epsilon$; results for $n = 100$ are shown in Fig. \ref{sim results2}.
Compared to Fig. \ref{sim results}, the MSE was larger in general when $\epsilon < 0.5$.
This reflects reduction in effective sample size of the set $\mathcal{D}_0$ used to build the control functional.

\begin{figure}[t!]
\centering
\includegraphics[width = \textwidth,clip,trim = 0.5cm 8.5cm 1cm 7.5cm]{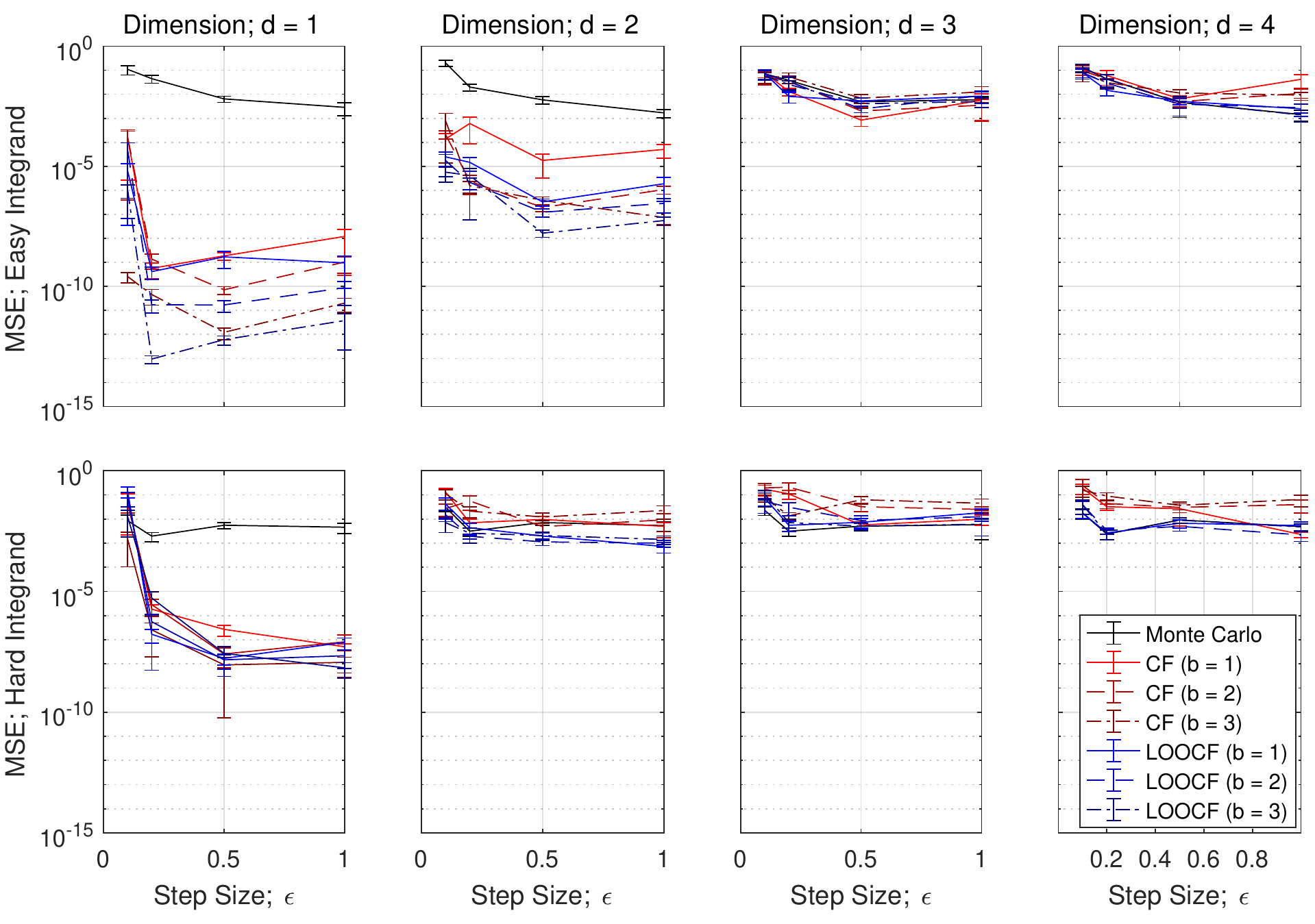}
\caption{
Simulation results; the case of dependent samples.
An `easy' and a `hard' integrand were considered.
The mean square error (MSE) was estimated for the standard Monte Carlo estimator, the control functional (CF) estimator $I_{m,n}$ and the leave-one-out (LOOCF) estimator $I_n$, where samples from a random walk of length $n = 100$ was used.
The MSE was plotted against the step size $\epsilon$ of the random walk.
The CF and LOOCF estimators were based on kernels of smoothness $b \in \{1,2,3\}$.
Standard errors are also displayed.}
\label{sim results2}
\end{figure}

\subsection{Application to Partial Differential Equations} \label{sec: application}

Our theoretical results are illustrated with a novel application to an inverse problem arising in a partial differential equation (PDE) model.
Specifically, we considered the following elliptic diffusion problem with mixed Dirichlet and Neumann boundary conditions:
\begin{eqnarray*}
\nabla_{\bm{x}} \cdot [\kappa(\bm{x};\bm{\theta}) \nabla_{\bm{x}} w(\bm{x})] & = & 0 \hspace{50pt} \text{if } x_1,x_2 \in (0,1) \\
w(\bm{x}) & = & \left\{ \begin{array}{ll} x_1 & \hspace{5pt} \text{if } x_2 = 0 \\ 1 - x_1 & \hspace{5pt} \text{if } x_2 = 1 \end{array} \right. \\
\nabla_{x_1} w(\bm{x}) & = & 0 \hspace{46.5pt} \text{ if } x_1 \in \{0,1\}.
\end{eqnarray*}
This PDE serves as a simple model of steady-state flow in aquifers and other subsurface systems; $\kappa$ can represent the permeability of a porous medium while $w$ represents the hydraulic head.
The aim is to make inferences on the field $\kappa$ in a setting where the underlying solution $w$ is observed with noise on a regular grid of $M^2$ points $\bm{x}_{i,j}$, $i,j = 1,\dots,M$.
The observation model $p(\bm{y}|\bm{\theta})$ takes the form $\bm{y} = \{y_{i,j}\}$ where $y_{i,j} = w(\bm{x}_{i,j}) + \epsilon_{i,j}$ and $\epsilon_{i,j}$ are independent normal random variables with standard deviation $\sigma = 0.1$.

\begin{figure}[t!]
\centering
\includegraphics[width = \textwidth]{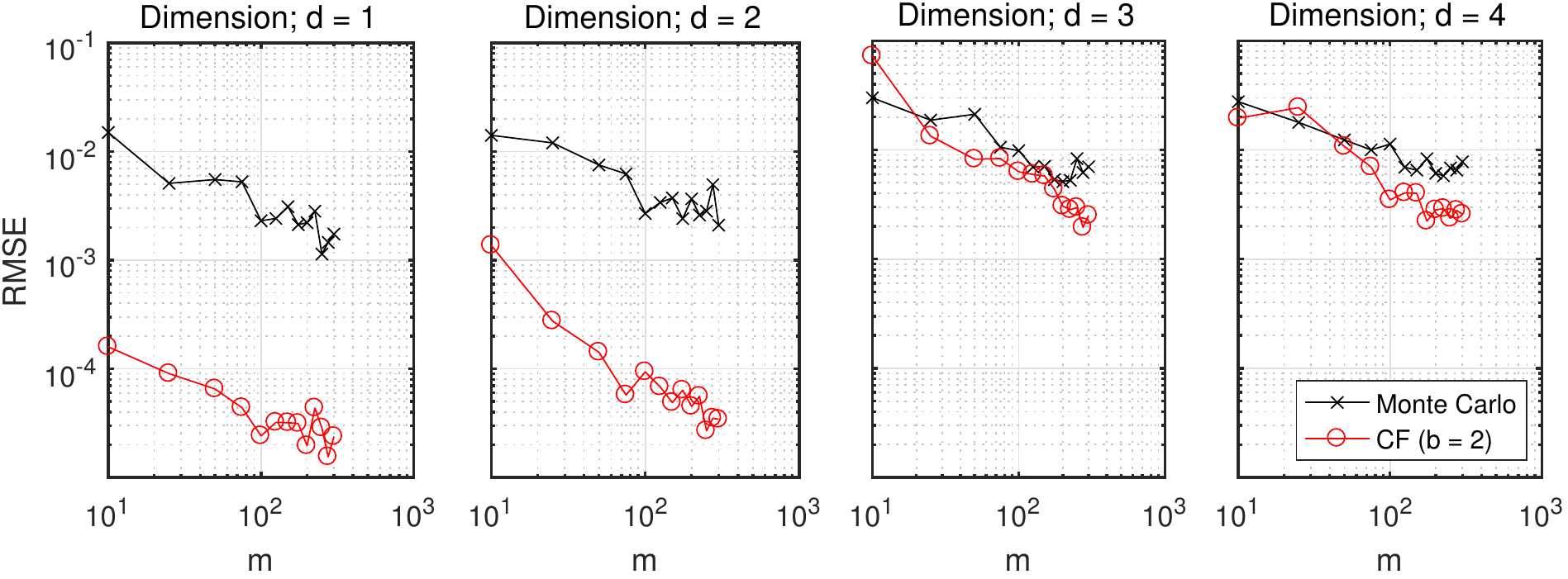}
\caption{Experimental results; an experiment to approximate the posterior mean of the parameters $\bm{\theta} \in [-10,10]^d$ that govern a permeability field. The figure shows root mean square error (RMSE) for (i) the standard Monte Carlo estimator based on $2m$ posterior samples, and (ii) the control functional (CF) estimator, where $m$ samples are used to train the control functional and the remaining $m$ samples are used to estimate the expectation. [Results are shown for the first parameter $\theta_1$; results for other parameters were similar. 
The Mat\'{e}rn kernel of order $7/2$ was employed; $b = 2$ in our notation.]}
\label{Matern}
\end{figure}

Following \cite{Stuart2}, the field $\kappa$ was endowed with a prior distribution of the form $\log \kappa(\bm{x};\bm{\theta}) = \sum_{i = 1}^d \theta_i \kappa_i(\bm{x})$, where the $\kappa_i$ are Fourier basis functions and $\theta_i$ are their associated coefficients.
For the inference we imposed a uniform prior $p(\bm{\theta}) \propto 1$ over the domain $[-10,10]^d$.
Our aim was to obtain accurate estimates for the posterior mean of the parameter $\bm{\theta}$.
The posterior density $p(\bm{\theta}| \bm{y}) \propto p(\bm{\theta}) p(\bm{y}|\bm{\theta})$ is available up to an unknown normalising constant $p(\bm{y})$.
Each evaluation of the likelihood necessitates the solution of the PDE; control functionals offer the possibility to reduce the number of likelihood evaluations, and hence the computational cost, required to achieve a given estimator precision.

As an aside, we note that the standard approach to inference employs a numerical integrator for the forward-solve, typically based on finite element methods.
This would provide us with gradient information on the posterior, but would also introduce some bias due to discretisation error.
To ensure that we obtain exact gradient information, we instead exploited a probabilistic meshless method due to \cite{Cockayne} as our numerical integrator.
Automatic differentiation was performed using the \verb+Autograd+ package \citep{Maclaurin}.

The key assumptions of our theory were verified.
Smoothness of the prior, together with ellipticity, imply (A1) holds for all $a \in \mathbb{N}$.
(A$\bar{2}$,5) hold since the prior and likelihood are well-behaved.
(A7) holds since the domain of integration was a hyper-cuboid.
Samples from the posterior $p(\bm{\theta}|\bm{y})$ were obtained using a Metropolis-adjusted Langevin sampler with fixed proposal covariance; this ensured that (A8) was satisfied.
Remaining assumptions were satisfied by construction of the kernel $k$:
Following the approach outlined in Section \ref{reproducing kernel Hilbert space construct}, we took $\tilde{k}(\bm{\theta},\bm{\theta}')$ to be the standard Mat\'{e}rn kernel of order $\frac{7}{2}$, so that $b =2$, and then formed $k(\bm{\theta},\bm{\theta}')$ as the product of $\tilde{k}(\bm{\theta},\bm{\theta}')$ and $\delta(\bm{\theta}) \delta(\bm{\theta}')$, where the boundary function $\delta$ satisfies $\delta(\bm{\theta}) = 1$ on $\bm{\theta} \in [-9,9]^d$, $\delta(\bm{\theta}) = 0$ when $\theta_i \in \{-10,10\}$ for some $i$, and $\delta$ was infinitely differentiable on $[-10,10]^d$.
With this construction, (A3) holds.
(A4) holds since $k$ has a root at $\theta_i \in \{-10,10\}$ for each $i$. 
The constant $c=1$ was fixed.
However the conclusion of Lemma \ref{characteristic} cannot be directly applied here since $\mathcal{H}$ is not $c$-universal ($k$ vanishes at $\theta_i = \pm 10$).

Observations were generated from the model with data-generating parameter $\bm{\theta} = \bm{1}$ and collected over a coarse grid of $M^2 = 36$ locations.
Samples of size $n$ were obtained from the posterior and divided equally between the training set $\mathcal{D}_0$ and test set $\mathcal{D}_1$.
The performance of gradient-based control functionals was benchmarked against that of standard Monte Carlo with all $n$ samples used.
We note that, in all experiments, all values of $\bm{\theta}$ encountered were contained in $[-9,9]^d$.
Thus it does not matter that we did not specify $\delta$ explicitly above, emphasising the weakness of assumption (A4) in practical application.

Results are shown in Figure \ref{Matern}.
For dimensions $d = 1$ and $2$, the estimator that uses control functionals achieved a dramatic reduction in asymptotic variance compared to the Monte Carlo benchmark.
On the other hand, for $d = 3,4$, the curse of dimension is clearly evident for the control functional method.

\section{Conclusion} \label{discuss}

This paper has established novel asymptotic analysis for a class of estimators based on Stein's method.
Our analysis makes explicit the contribution of the smoothness $a$ of the distribution $\Pi$, the smoothness $b$ of the test function $f$ and the dimension $d$ of the domain of integration.
As such, these results provide a rigorous theoretical explanation for the excellent performance in low-dimensions observed in previous work.

Several extensions of this work are suggested:
(i) Our results focused on compact domains, since this is the usual setting for results in the scattered data approximation literature.
However, the estimation method does not itself require that the domain of integration be compact.
Extending this analysis to the unbounded-domain setting appears challenging at present and remains a goal for future research.
(ii) Alternative literatures to the scattered data literature could form the basis of an analysis of control functionals, such as e.g. recent work by \cite{Migliorati}.
These efforts have the advantage of providing $L^2$ error bounds, rather than $L^\infty$ error bounds and might facilitate the extension to unbounded domains.
(iii) Generally, our theoretical results clarify the need to develop estimation strategies that do not suffer from the curse of dimension.
While this curse is intrinsic to functional approximation in general, due to the need to explore the state space, the observation that many test functions of interest are of low `effective dimension' suggests that more regularity on the function space could reasonably be assumed.
(iv) Recent work in \cite{Liu4} imposed an additional constraint on the coefficients $a_i$ in Sec. \ref{con approx}.
It would be interesting to extend our analysis to this context.

\vspace{20pt}

\noindent {\bf Acknowledgements:} The authors wish to thank Aretha Teckentrup, Motonobu Kanagawa, Lester Mackey and anonymous referees for their useful feedback.
CJO was supported by the ARC Centre of Excellence for Mathematical and Statistical Frontiers.
CJO and MG were supported by the Lloyds-Turing Programme on Data-Centric Engineering.
FXB was supported by EPSRC [EP/L016710/1]. 
MG was supported by the EPSRC grants [EP/J016934/3, EP/K034154/1, EP/P020720/1], an EPSRC Established Career Fellowship, the EU grant [EU/259348] and a Royal Society Wolfson Research Merit Award.

\newpage

\section{Supplement}

In this electronic supplement we establish correctness of the theoretical results in the main text.

\subsection{Additional Notation}

Let $\|f\|_2 := (\int f^2 \d\Pi)^{1/2}$.
Write $\text{vol}(A) := \int 1_A \d\Lambda$ and $\d{\bm{x}} := \d \Lambda(\bm{x})$.
Let $L^\infty(\mathcal{X})$ denote the set of measurable functions for which $\sup_{\bm{x} \in \mathcal{X}} |f(\bm{x})| < \infty$.
The notation $\oint_{\partial\mathcal{X}}$ will be used to denote a surface integral over $\partial\mathcal{X}$, $\bm{n}(\bm{x})$ to denote the unit normal vector to $\partial\mathcal{X}$ and $S(\d{} \bm{x})$ to denote a surface element.

\subsection{Proofs}

\begin{proof}[Proof of Lemma \ref{lem integrate to zero}]
From the Moore-Aronszajn theorem \citep{Aronszajn}, $\mathcal{H}^d$ can be expressed as
$$
\mathcal{H}^d = \left\{ \bm{\phi} : \mathcal{X} \rightarrow \mathbb{R}^d \text{ s.t. } \phi_i(\bm{x}) = \sum_{j=1}^\infty a_{i,j} k(\bm{x},\bm{x}_{i,j}) \text{ where } \sum_{j=1}^\infty a_{i,j}^2 k(\bm{x}_{i,j},\bm{x}_{i,j}) < \infty \right\}.
$$
Note that from (A3) the kernel $k$ is continuous and thus bounded on the compact set $\mathcal{X} \cup \partial \mathcal{X}$.
It follows that the series representation given in the Moore-Aronszajn theorem is uniformly convergent.
Then, for $\bm{\phi} \in \mathcal{H}^d$ represented in Moore-Aronszajn form,
\begin{eqnarray*}
\int \mathbb{S}_\pi[\bm{\phi}](\bm{x}) \Pi(\d{} \bm{x}) & = & \int \sum_{i=1}^d (\nabla_{x_i} + \nabla_{x_i} \log \pi(\bm{x})) \sum_{j=1}^\infty a_{i,j} k(\bm{x},\bm{x}_{i,j}) \; \Pi(\d{} \bm{x}) \\
& = & \sum_{i=1}^d \sum_{j=1}^\infty a_{i,j} \int (\nabla_{x_i} + \nabla_{x_i} \log \pi(\bm{x})) k(\bm{x},\bm{x}_{i,j}) \; \Pi(\d{} \bm{x}) \\
& = & \sum_{i=1}^d \sum_{j=1}^\infty a_{i,j} \int_{\mathcal{X} \cup \partial \mathcal{X}} \nabla_{x_i} \{ \pi(\bm{x}) k(\bm{x},\bm{x}_{i,j}) \} \; \mathrm{d} \bm{x} \\
& = & \sum_{i=1}^d \sum_{j=1}^\infty a_{i,j} \oint_{\partial\mathcal{X}} \underbrace{ \pi(\bm{x}) k(\bm{x},\bm{x}_{i,j}) }_{(*)} n_i(\bm{x}) S(\mathrm{d} \bm{x}) \quad = \quad 0,
\end{eqnarray*}
where the order of summation and calculus operations can be interchanged due to uniform convergence of the series representation on the compact set $\mathcal{X} \cup \partial \mathcal{X}$.
The term $(*)$ equals zero for all $\bm{x}_{i,j} \in \mathcal{X} \cup \partial \mathcal{X}$ by (A4).
\end{proof}

\begin{proof}[Proof of Lemma \ref{tech lemma}]
In rigorously establishing this result there are six main steps.
Initially we fix $\bm{x} \in \mathcal{X} \cup \partial \mathcal{X}$ and aim to show that with high probability there exists a state $\bm{x}_j$ close to $\bm{x}$.
Then we consider implications for the distribution of the fill distance.

\vspace{5pt}
\noindent {\bf Step \#1:} {\it Construct a reference grid.}
Since $\mathcal{X}$ is bounded, without loss of generality suppose $\mathcal{X} \cup \partial \mathcal{X} \subseteq [0,1]^d$.
For $M \in \mathbb{N}$, define a uniform grid of reference points $\{\bm{g}_i\}_{i=1}^G \subset [0,1]^d$ consisting of all $G = M^d$ states of the form $\bm{g} = (g_1,\dots,g_d)$ where 
$g_i \in \{0,\frac{1}{M-1},\dots,\frac{M-2}{M-1},1\}$ 
(Fig. \ref{fig:step1}).
We will require that this grid has a sufficiently fine resolution; specifically we suppose that 
$$
M \geq \frac{\sqrt{d}(1 + \sin\theta)}{2r\sin\theta} + 1, 
$$
where $r$, $\theta$ are defined by the interior cone condition that $\mathcal{X} \cup \partial \mathcal{X}$ is assumed to satisfy.

\begin{figure}[t!]
\centering
\begin{subfigure}[b]{0.46\textwidth}
\includegraphics[page=1,width=\textwidth,clip,trim = 2.5cm 3.5cm 17.5cm 1.6cm]{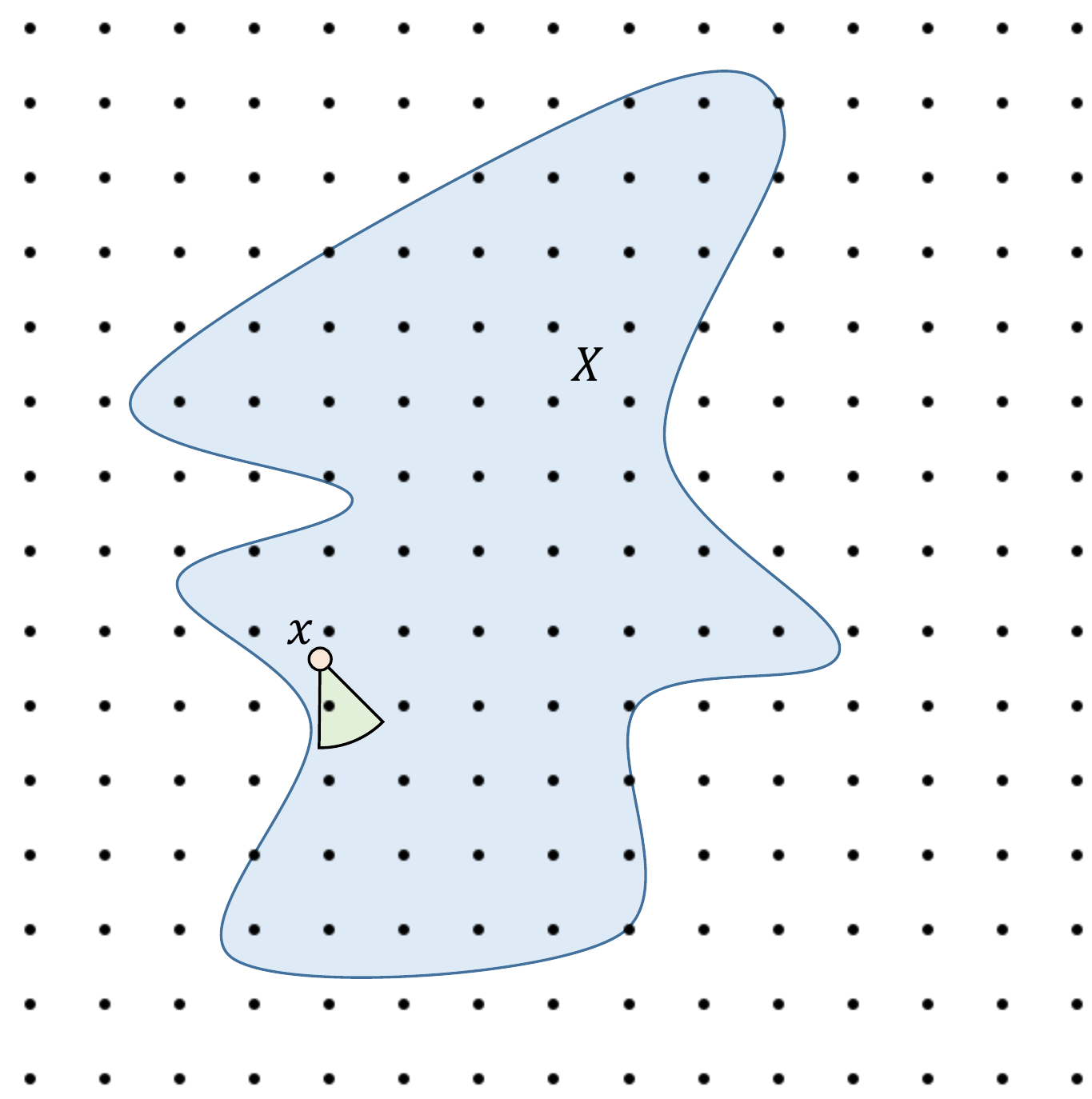}
\caption{Step \#1}
\label{fig:step1}
\end{subfigure}
\hspace{20pt}
\begin{subfigure}[b]{0.46\textwidth}
\includegraphics[page=2,width=\textwidth,clip,trim = 2.5cm 3.5cm 17.5cm 1.6cm]{figures/cartoon.pdf}
\caption{Step \#2}
\label{fig:step2}
\end{subfigure}
\caption{Schematic of the steps involved in proving Lemma \ref{tech lemma}.}
\end{figure}

\vspace{5pt}
\noindent {\bf Step \#2:} {\it Find a grid point $\bm{g}_i$ near to $\bm{x}$.}
Define  
$$
R = \frac{\sqrt{d}}{2} \left(\frac{1}{\sin\theta} + 1\right).
$$
We claim that there exists a grid point $\bm{g}_i$ such that $\|\bm{x} - \bm{g}_i\|_2 < \frac{R}{M-1}$.
Indeed, from the interior cone condition there exists a cone $\mathcal{C}(\bm{x},\bm{\xi},\theta,r) \subseteq \mathcal{X}$ (Fig. \ref{fig:step1}).
Define 
$$
h = \frac{\sqrt{d}}{2(M-1) \sin \theta}.
$$
The fine grid resolution implies $h \leq r / (1 + \sin\theta)$.
From \cite{Wendland}, Lemma 3.7, it follows that the cone $\mathcal{C}(\bm{x},\bm{\xi},\theta,r)$ contains the ball $B(\bm{y},\epsilon)$ with centre $\bm{y} = \bm{x} + h \bm{\xi}$ and radius $\epsilon = h \sin \theta$ (Fig. \ref{fig:step2}).
Now, the fill distance for the grid $\{\bm{g}_i\}_{i=1}^G$ relative to the space $[0,1]^d$ can be shown to equal $\sqrt{d} / (2(M-1))$, which is exactly equal to the radius $\epsilon$.
It follows that $B(\bm{y},\epsilon)$ must contain a grid point $\bm{g}_i$ for some $i \in \{1,\dots,G\}$.
From the triangle inequality;
\begin{eqnarray*}
\|\bm{x} - \bm{g}_i\|_2 & \leq & \underbrace{\|\bm{x} - \bm{y}\|_2}_{=h} + \underbrace{\|\bm{y} - \bm{g}_i\|_2}_{\leq \epsilon} \\
& = & \frac{\sqrt{d}}{2(M-1)\sin\theta} + \frac{\sqrt{d}}{2(M-1)} \; = \; \frac{R}{M-1}
\end{eqnarray*}
This establishes the claim.

\vspace{5pt}
\noindent {\bf Step \#3:} {\it A set of points that `cover the grid' must contain at least one point that is near to $\bm{x}$. }
Consider the event 
$$
E = \left[\forall i \exists j : \|\bm{g}_i - \bm{x}_j\|_2 \leq \frac{1}{M-1} \right].
$$ 
Conditional on $E$, there exists $\bm{x}_j$ such that $\|\bm{g}_i - \bm{x}_j\|_2 \leq \frac{1}{M-1}$.
It follows that, conditional on $E$, we have
\begin{eqnarray*}
\|\bm{x} - \bm{x}_j\|_2 & \leq & \|\bm{x} - \bm{g}_i\|_2 + \|\bm{g}_i - \bm{x}_j\|_2 \\
& \leq & \frac{R}{M-1} + \frac{1}{M-1} \; = \; \frac{R+1}{M-1}
\end{eqnarray*}
where we have used the result of Step \#1.
Thus the event $E$, or `covering the grid', implies the event $[h_{\mathcal{D}_0} \leq \frac{R+1}{M-1}]$ (Fig. \ref{fig:step3}).

\begin{figure}[t!]
\centering
\begin{subfigure}[b]{0.46\textwidth}
\includegraphics[page=3,width=\textwidth,clip,trim = 0.9cm 1.1cm 16cm 0.9cm]{figures/cartoon.pdf}
\caption{Step \#3}
\label{fig:step3}
\end{subfigure}
\hspace{20pt}
\begin{subfigure}[b]{0.46\textwidth}
\includegraphics[page=4,width=\textwidth,clip,trim = 0.9cm 1.1cm 16cm 0.9cm]{figures/cartoon.pdf}
\caption{Step \#4}
\label{fig:step4}
\end{subfigure}
\caption{Schematic of the steps involved in proving Lemma \ref{tech lemma}.}
\end{figure}

\vspace{5pt}
\noindent {\bf Step \#4:} {\it The grid is covered with high probability. }
Next, we upper-bound the probability $\mathbb{P}_{\mathcal{D}_0}[E^c]$ of the event $E^c$.
For this, note that for all $i \in \{1,\dots,G\}$ there exists a unit vector $\bm{\xi}_i$ such that the cone $\mathcal{C}_i = \mathcal{C}(\bm{g}_i,\bm{\xi}_i,\theta,r)$ is contained in $\mathcal{X}$ (Fig. \ref{fig:step4}).
It follows that
\begin{eqnarray*}
\text{vol}\left(B\left(\bm{g}_i,\frac{1}{M-1}\right) \cap \mathcal{X}\right) & \geq & \text{vol}\left(B\left(\bm{g}_i,\frac{1}{M-1}\right) \cap \mathcal{C}_i\right) \\
& = & \text{vol}\left(\mathcal{C}\left(\bm{g}_i,\bm{\xi}_i,\theta,\frac{1}{M-1}\right)\right)  
\end{eqnarray*}
where the final inequality follows since $\frac{1}{M-1} \leq r$ and the intersection of a cone with a ball is another cone.
Now, from \cite{Wendland}, Lemma 3.7, the ball with centre $\bm{g}_i + \Delta \bm{\xi}_i$ and radius 
$$
\Delta = \frac{\sin\theta}{(M-1)(1 + \sin\theta)}
$$
is contained in the cone $\mathcal{C}(\bm{g}_i,\bm{\xi}_i,\theta,\frac{1}{M-1})$.
Continuing,
\begin{eqnarray*}
\text{vol}\left(\mathcal{C}\left(\bm{g}_i,\bm{\xi}_i,\theta,\frac{1}{M-1}\right)\right)  & \geq & \text{vol}(B(\bm{g}_i + \Delta \bm{\xi}_i,\Delta)) \\
& = & V \; := \; \frac{(\frac{\sin\theta}{1 + \sin\theta})^d \pi^{d/2}}{\Gamma(\frac{d}{2} + 1) (M-1)^d}.
\end{eqnarray*}
Now, we have
\begin{eqnarray*}
\mathbb{P}_{\mathcal{D}_0}[E^c] & = & \mathbb{P}_{\mathcal{D}_0}\left[\exists i : \forall j, \|\bm{g}_i - \bm{x}_j \|_2 > \frac{1}{M-1}\right] \\
& \leq & \sum_{i=1}^G \underbrace{\mathbb{P}_{\mathcal{D}_0} \left[\forall j, \|\bm{g}_i - \bm{x}_j\|_2 > \frac{1}{M-1}\right]}_{(*)}.
\end{eqnarray*}
The probability $(*)$ can be bounded above using independence of the samples.
Indeed, the probability that a random draw from $\Pi$ lands in $B(\bm{g}_i,\frac{1}{M-1})$ can be lower-bounded by $V$ times $\eta = \inf_{\bm{x} \in \mathcal{X} \cup \partial \mathcal{X}} \pi(\bm{x}) > 0$ since, under (A1,5), $\pi$ can be decomposed as a mixture density with two components, the first uniform over $\mathcal{X} \cup \partial \mathcal{X}$ with mixture weight $\eta$ and the second a component with density $\tilde{\pi}(\bm{x}) \propto \pi(\bm{x}) - \eta$ and mixture weight $1 - \eta$.
Thus we have
\begin{eqnarray}
\mathbb{P}_{\mathcal{D}_0} \left[\forall j, \|\bm{g}_i - \bm{x}_j\|_2 > \frac{1}{M-1}\right] & \leq & ( 1 - V \eta )^m. \label{using independence}
\end{eqnarray}
Note that this is the only place in the proof that independence of the $\{\bm{x}_i\}$ is required.

\vspace{5pt}
\noindent {\bf Step \#5:} {\it The fill distance is small with high probability. }
The result of Step \#4 can be used to derive a lower bound on the distribution function of the fill distance:
\begin{eqnarray*}
\mathbb{P}_{\mathcal{D}_0}\left[ h_{\mathcal{D}_0} > \frac{R+1}{M-1}\right] \; \leq \; \mathbb{P}_{\mathcal{D}_0}[E^c] & \leq & G ( 1 - V \eta )^m \\
& = & G \left( 1 - \frac{ (\frac{\sin\theta}{1 + \sin\theta})^d \pi^{d/2} \eta}{ \Gamma(\frac{d}{2} + 1) (M-1)^d} \right)^m
\end{eqnarray*}
Letting $\zeta = \frac{R+1}{M-1}$ implies that $M = 1 + \frac{R+1}{\zeta}$ and $G = (1 + \frac{R+1}{\zeta})^d$.
In this parametrisation, when $M > R+2$ we have $0<\zeta<1$ and hence
\begin{eqnarray}
\mathbb{P}_{\mathcal{D}_0}[h_{\mathcal{D}_0} > \zeta] & \leq &  \left( 1 + \frac{R+1}{\zeta} \right)^d \left[ 1 - \frac{ (\frac{\sin\theta}{1 + \sin\theta})^d \pi^{d/2} \eta}{ \Gamma(\frac{d}{2}+1) (R+1)^d} \zeta^d \right]^m \nonumber \\
& \leq  & \left(\frac{R+2}{\zeta}\right)^d (1 - C_d \zeta^d)^m, \label{tech eq1}
\end{eqnarray}
where we have written 
$$
C_d = \frac{(\frac{\sin\theta}{1 + \sin\theta})^d \pi^{d/2} \eta }{ \Gamma(\frac{d}{2}+1)(R+1)^d}.
$$
While Eqn. \ref{tech eq1} holds only for $\zeta$ of the form $\frac{R+1}{M-1}$, it can be made to hold for all $0 < \zeta < 1$ by replacing $C_d$ with $\tilde{C}_d = 2^{-d} C_d$.
This is because for any $0 < \zeta < 1$ there exists $M >1$ such that $\tilde{\zeta} = \frac{R+1}{M-1}$ satisfies $\frac{\zeta}{2} \leq \tilde{\zeta} < \zeta$, along with the fact that $\mathbb{P}_{\mathcal{D}_0}[h_{\mathcal{D}_0} > \zeta] \leq \mathbb{P}_{\mathcal{D}_0}[h_{\mathcal{D}_0} > \tilde{\zeta}]$.

\vspace{5pt}
\noindent {\bf Step \#6:} {\it Putting it all together. }
From $\mathcal{X} \subseteq [0,1]^d$ we have that $h_{\mathcal{D}_0} \in [0,1]$ and hence, since $g$ is continuous and $[0,1]$ is compact, $\sup_{h \in [0,1]} g(h) < \infty$.
Without loss of generality, and with probability one, we have $g(h_{\mathcal{D}_0}) \leq 1$.
(This is without loss of generality since we can simply redefine $g$ as $g / g(1)$.)
From the reverse Markov inequality, for all $\zeta < \mathbb{E}_{\mathcal{D}_0}[g(h_{\mathcal{D}_0})]$,
\begin{eqnarray*}
\mathbb{P}_{\mathcal{D}_0}[g(h_{\mathcal{D}_0}) > \zeta] & \geq & \frac{ \mathbb{E}_{\mathcal{D}_0}[g(h_{\mathcal{D}_0})] - \zeta}{1 - \zeta} 
\end{eqnarray*}
and upon rearranging
\begin{eqnarray}
\mathbb{E}_{\mathcal{D}_0}[g(h_{\mathcal{D}_0})] & \leq & \zeta + (1 - \zeta) \mathbb{P}_{\mathcal{D}_0}[g(h_{\mathcal{D}_0}) > \zeta]. \label{tech eq2}
\end{eqnarray}
Since $g$ is continuous and monotone, $g^{-1}$ exists and we have $\mathbb{P}_{\mathcal{D}_0}[g(h_{\mathcal{D}_0}) \leq \zeta] = \mathbb{P}_{\mathcal{D}_0}[h_{\mathcal{D}_0} \leq g^{-1}(\zeta)]$.
This allows us to combine Eqns. \ref{tech eq1} and \ref{tech eq2}, obtaining
\begin{eqnarray*}
\mathbb{E}_{\mathcal{D}_0}[g(h_{\mathcal{D}_0})] \quad & \leq & \zeta + (1 - \zeta)\left( \frac{R+2}{g^{-1}(\zeta)} \right)^d (1 - \tilde{C}_d(g^{-1}(\zeta))^d)^m \\
& \leq & \zeta + \left( \frac{R+2}{g^{-1}(\zeta)} \right)^d (1 - \tilde{C}_d(g^{-1}(\zeta))^d)^m.
\end{eqnarray*}
Now, letting $\zeta = g(m^{-\delta})$ for some fixed $\delta$, subject to $\frac{1}{3d} \leq \delta < \frac{1}{d}$, and varying $m$, we have that
\begin{eqnarray}
\mathbb{E}_{\mathcal{D}_0}[g(h_{\mathcal{D}_0})] & \leq & g(m^{-\delta}) + (R+2)^d \underbrace{m^{d\delta} (1 - \tilde{C}_d m^{-d \delta})^m}_{(**)} \label{tech eq3}
\end{eqnarray}
where $(**) = O( m^{d\delta} \exp(- \tilde{C}_d m^{1 - d \delta}) )$.
Indeed, since $\log(1-x) \leq -x$ for all $|x|<1$,
\begin{eqnarray*}
\log[ m^{d\delta} (1 - \tilde{C}_d m^{-d \delta})^m ] & = & d\delta \log(m) + m \log(1 - \tilde{C}_d m^{-d\delta}) \\
& \leq & d\delta \log(m) - \tilde{C}_d m^{1 - d\delta} \; = \; \log[ m^{d\delta} \exp( -\tilde{C}_d m^{1 - d\delta} ) ].
\end{eqnarray*}
Writing $x = m^{-\delta}$,
\begin{eqnarray*}
\frac{ m^{d\delta} \exp(-\tilde{C}_d m^{1 - d\delta}) }{ g(m^{-\delta}) } & = & \frac{ x^{-d} \exp(-\tilde{C}_d x^{d - 1/\delta}) }{ g(x) } \\
& \leq & \frac{ x^{-d} \exp(-\tilde{C}_d x^{-2d}) }{ g(x) } \; \; \; \text{(take } \delta = \frac{1}{3d} \text{).}
\end{eqnarray*}
Under the hypothesis on the limiting behavior of $g(x)$ as $x \downarrow 0$, we have that
\begin{eqnarray*}
\lim_{m \rightarrow \infty} \frac{ m^{d\delta} \exp(-\tilde{C}_d m^{1 - d\delta}) }{ g(m^{-\delta}) } & = &  \lim_{x \downarrow 0} \frac{ x^{-d} \exp(-\tilde{C}_d x^{-2d}) }{ g(x) } \\
& \leq & \lim_{x \downarrow 0} \frac{ x^{-d} \exp(- x^{-3d}) }{ g(x) }  \; = \; 0
\end{eqnarray*}
Thus the right hand side of Eqn. \ref{tech eq3} is asymptotically minimised by taking $\delta$ as large as possible, subject to $(**)$ converging exponentially fast, i.e. $\delta = \frac{1}{d} - \epsilon$ for $\epsilon > 0$ arbitrarily small.
We therefore conclude that $\mathbb{E}_{\mathcal{D}_0}[g(h_{\mathcal{D}_0})] = O(g(m^{-1 / d + \epsilon}))$, as required.
\end{proof}

\begin{proof}[Proof of Theorem \ref{independent}]
Unbiasedness follows directly from the structure of splitting estimators (Eqn. \ref{splitting estimators}).
From Eqn. \ref{analyse mse} it suffices to consider the rate $\delta$ at which $\sigma^2(f - f_m)$ vanishes.
First, note that 
\begin{eqnarray*}
\sigma^2(f - f_m) & = & \int \left[f(\bm{x}) - f_m(\bm{x}) - \int [f(\bm{x}') - f_m(\bm{x}')] \Pi(\d{}\bm{x}')  \right]^2 \Pi(\d{} \bm{x}) \\ 
& = & \int [f(\bm{x}) - f_m(\bm{x})]^2 \Pi(\d{} \bm{x}) - \left[\int [f(\bm{x}') - f_m(\bm{x}')] \Pi(\d{} \bm{x}')  \right]^2 \\ 
& \leq & \int [f(\bm{x}) - f_m(\bm{x})]^2 \Pi(\d{} \bm{x}).
\end{eqnarray*}
Second, observe that from (A1-3) the kernel $k_+ \in C_2^{a \wedge b}(\mathcal{X} \cup \partial \mathcal{X})$.
Thus from Theorem 11.13 of \cite{Wendland} there exists $h > 0$, $C>0$ such that, whenever $h_{\mathcal{D}_0} < h$, we have
\begin{eqnarray*}
|f(\bm{x}) - f_m(\bm{x})| & \leq & C h_{\mathcal{D}_0}^{a \wedge b} \|f\|_{\mathcal{H}_+} 
\end{eqnarray*}
for all $\bm{x} \in \mathcal{X}$.
An unconditional bound, after squaring and integrating according to $\Pi$, is
\begin{eqnarray*}
1_{h_{\mathcal{D}_0} < h} \int [f(\bm{x}) - f_m(\bm{x})]^2 \Pi(\d{} \bm{x}) & \leq & C^2 h_{\mathcal{D}_0}^{2(a \wedge b)} \|f\|_{\mathcal{H}_+}^2.
\end{eqnarray*}
Third, combining the first two parts and taking an expectation over the $m$ samples in $\mathcal{D}_0$ gives 
\begin{eqnarray*}
\mathbb{E}_{\mathcal{D}_0}[1_{h_{\mathcal{D}_0} < h} \sigma^2(f - f_m)] & \leq & C^2 \|f\|_{\mathcal{H}_+}^2 \mathbb{E}_{\mathcal{D}_0}[h_{\mathcal{D}_0}^{2(a \wedge b)}].
\end{eqnarray*}
Finally, from Lemma \ref{tech lemma} with $g(x) = x^{2(a \wedge b)}$ we have that $\mathbb{E}_{\mathcal{D}_0}[h_{\mathcal{D}_0}^{2(a \wedge b)}] = O(m^{-2 \frac{a \wedge b}{d} + \epsilon})$ for $\epsilon > 0$ arbitrarily small.
The result follows from Eqn. \ref{analyse mse}.
\end{proof}

\begin{proof}[Proof of Lemma \ref{tech lemma 2}]
In the proof of Lemma \ref{tech lemma}, independence of the samples $\{\bm{x}_i\}_{i=1}^n$ was only used to establish Eqn. \ref{using independence}.
Below we derive an almost equivalent inequality that is valid for non-independent samples arising from the Markov chain sample path.

From \citet[][Prop. 1]{Roberts2} a uniformly ergodic, reversible Markov chain is strongly uniformly ergodic; i.e. there exists $N \in \mathbb{N}$ and $0 < \upsilon < 1$ such that for all $\bm{x} \in \mathcal{X}$ the $N$-step transition density $P^N(\bm{x},\cdot)$ satisfies the minorisation condition $P^N(\bm{x},\cdot) \geq \upsilon \Pi[\cdot]$.
Manipulating this minorisation condition gives $P^N(\bm{x},A^c) = 1 - P^N(\bm{x},A) \leq 1 - \upsilon \Pi(A)$ for $A \in \mathcal{A}$.

Fix $A = B(\bm{g}_i , \frac{1}{M-1})$.
Denote the initial distribution of the Markov chain by $\Pi_1$ and, for notational simplicity only, assume $\Pi_1$ has a density $\pi_1 = \d\Pi_1 / \d\Lambda$.
Define the hitting time $\tau_A := \min\{n : \bm{x}_n \in A \text{ given } \bm{x}_1 \sim \Pi_1\}$.
Then for $n \geq N$ we have the following bound:
\begin{eqnarray*}
\mathbb{P}[\tau_A > n] & = & \int_{\bm{x}_1 \in A^c} \dots \int_{\bm{x}_n \in A^c} \pi_1(\bm{x}_1) P(\bm{x}_1,\bm{x}_2) \dots P(\bm{x}_{n-1},\bm{x}_n) \d{\bm{x}}_n \dots \d{\bm{x}}_1 \nonumber \\
& = & \int_{\bm{x}_1 \in A^c} \dots \int_{\bm{x}_{n-N} \in A^c} \pi_1(\bm{x}_1) P(\bm{x}_1,\bm{x}_2) \dots P(\bm{x}_{n-N-1},\bm{x}_{n-N}) \\
& & \hspace{-55pt} \times \underbrace{\int_{\bm{x}_{n-N+1} \in A^c} \hspace{-10pt} \dots \int_{\bm{x}_n \in A^c} \begin{array}{l} \hspace{-20pt} P(\bm{x}_{n-N},\bm{x}_{n-N+1}) \dots P(\bm{x}_{n-1},\bm{x}_n) \\ \hspace{100pt} \d{\bm{x}}_{n-N+1} \d{\bm{x}}_n \end{array} }_{ \leq P^N(\bm{x}_{n-N},A^c)} \d{\bm{x}}_{n-N} \dots \d{\bm{x}}_1 \\
& \leq & [1 - \upsilon \Pi(A)] \\
& & \times \int_{\bm{x}_1 \in A^c} \dots \int_{\bm{x}_{n-N} \in A^c} \begin{array}{l} \hspace{-20pt} \pi_1(\bm{x}_1) P(\bm{x}_1,\bm{x}_2) \dots P(\bm{x}_{n-N-1},\bm{x}_{n-N}) \\ \hspace{110pt} \d{\bm{x}}_{n-N} \dots \d{\bm{x}}_1 \end{array}  \nonumber \\
\dots & \leq & [1 - \upsilon \Pi(A)]^{\lfloor n/N \rfloor} \\
& & \hspace{-55pt} \times \int_{\bm{x}_1 \in A^c} \dots \int_{\bm{x}_{n-\lfloor n / N \rfloor N} \in A^c} \begin{array}{l} \hspace{-30pt} \pi_1(\bm{x}_1) P(\bm{x}_1,\bm{x}_2) \dots P(\bm{x}_{n-\lfloor n / N \rfloor N -1},\bm{x}_{n-\lfloor n / N \rfloor N}) \\ \hspace{120pt} \d{\bm{x}}_{n-\lfloor n / N \rfloor N} \dots \d{\bm{x}}_1 \end{array} \nonumber \\
& \leq & [1 - \upsilon \Pi(A)]^{\lfloor n/N \rfloor} \label{dependent bounding}
\end{eqnarray*}
Employing this bound, we have
\begin{eqnarray*}
\mathbb{P}_{\mathcal{D}_0} \left[\forall j, \|\bm{g}_i - \bm{x}_j\|_2 > \frac{1}{M-1}\right] & = & \mathbb{P}_{\mathcal{D}_0}[\tau_A > m] \; \leq \; [1 - \upsilon \Pi(A)]^{\lfloor m / N \rfloor}.
\end{eqnarray*}
As before we have $\Pi(A) \geq V \eta$ and hence
\begin{eqnarray*}
\mathbb{P}_{\mathcal{D}_0} \left[\forall j, \|\bm{g}_i - \bm{x}_j\|_2 > \frac{1}{M-1}\right] & \leq &  ( 1 - \upsilon V \eta )^{\lfloor m / N \rfloor}. \label{generalised eqn}
\end{eqnarray*}
Eqn. \ref{generalised eqn} is essentially identical to Eqn. \ref{using independence} up to the inclusion of a factor $0<\upsilon<1$ and a factor $1/N$.
Arguing as in Lemma \ref{tech lemma} with $\tilde{C}_d$ replaced by $\upsilon N^{-1} \times \tilde{C}_d$ completes the proof.
\end{proof}

\begin{proof}[Proof of Theorem \ref{dependent}]
We appeal to Theorem 1 of \cite{Roberts}.
This states that reversible, geometrically ergodic Markov chains $\{\bm{x}_i\}_{i=1}^n$ are {\it variance bounding}, meaning that there exists $K < \infty$ such that
\begin{eqnarray}
\lim_{n \rightarrow \infty} \frac{1}{n} \mathbb{V} \left[ \sum_{i=1}^n h(\bm{x}_i) \right] \leq K \sigma^2(h)
\label{variance bounding}
\end{eqnarray}
holds for all $h \in L^2(\mathcal{X},\Pi)$.
Suppose that the chain starts at stationarity, so that Eqn. \ref{variance bounding} is equivalent to the statement
\begin{eqnarray}
\lim_{n \rightarrow \infty} n \mathbb{E} \left[ \left( \frac{1}{n} \sum_{i=1}^n h(\bm{x}_i) - \int h \d\Pi \right)^2 \right] \leq K \sigma^2(h). \label{equivalent statement}
\end{eqnarray}
(An arbitrary initial condition $\bm{x}_1 \sim \Pi_1$ can be handled using standard renewal theory; we do not present the details here.)
Applying Eqn. \ref{equivalent statement} to the control functional estimator produces
\begin{eqnarray} \label{T2bound1}
\lim_{n \rightarrow \infty} (n-m) \mathbb{E}_{\mathcal{D}_1} \left[\left(I_{m,n} - \int f \d\Pi\right)^2\right] & \leq & K \sigma^2(f - f_m).
\end{eqnarray}
Arguing as in the proof of Theorem \ref{independent}, we have 
\begin{eqnarray} \label{T2bound2}
\mathbb{E}_{\mathcal{D}_0} [1_{h_{\mathcal{D}_0} < h} \sigma^2(f - f_m)] & \leq & C^2 \|f\|_{\mathcal{H}_+}^2  \mathbb{E}_{\mathcal{D}_0}[h_{\mathcal{D}_0}^{2(a \wedge b)}].
\end{eqnarray}
Finally, it remains only to show that the scaling relation $\mathbb{E}_{\mathcal{D}_0}[h_{\mathcal{D}_0}^{2(a \wedge b)}] = O(n^{-2(a \wedge b) / d + \epsilon})$, where $\epsilon > 0$ can be arbitrarily small, holds in the non-independent setting.
This was precisely the content of Lemma \ref{tech lemma 2}.
\end{proof}

Denote $\bm{k} : \mathcal{X} \times \mathcal{X} \rightarrow \mathbb{R}^d$ for the vector $\bm{k}(\bm{x},\bm{x}') = \bm{1} k(\bm{x},\bm{x}')$.
The shorthand notation $\mathbb{S}_q[\bm{k}(\bm{x},\bm{x}')]$ will be used to refer to $\mathbb{S}_q[\bm{k}(\cdot,\bm{x}')](\bm{x})$; i.e. the action of the Stein operator on the first argument of a bivariate function.

\begin{lemma} \label{new lemma}
Assume (A3).
For each $q \in \mathcal{Q}(k) \cap \mathcal{R}(k)$ it holds that 
\begin{eqnarray}
\int \mathbb{S}_q[\bm{k}(\bm{x},\bm{x}')] Q(\d{} \bm{x}) & = & 0, \label{eqn: stein on kernel}
\end{eqnarray}
where the left hand side exists for all $\bm{x}' \in \mathcal{X}$.
\end{lemma}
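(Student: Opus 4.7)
The plan is to follow the same integration-by-parts strategy used in Lemma~\ref{lem integrate to zero}, but to bypass the Moore--Aronszajn series expansion entirely: since $\bm{k}(\cdot,\bm{x}') = \bm{1}\,k(\cdot,\bm{x}')$ is already a single $C^1$ vector field (for each fixed $\bm{x}'$), a direct product-rule manipulation will suffice. The key identity I would write down is
\[
\mathbb{S}_q[\bm{k}(\bm{x},\bm{x}')](\bm{x}) \, q(\bm{x}) \;=\; q(\bm{x}) \, \nabla_{\bm{x}} \cdot \bm{k}(\bm{x},\bm{x}') + \bm{k}(\bm{x},\bm{x}') \cdot \nabla_{\bm{x}} q(\bm{x}) \;=\; \nabla_{\bm{x}} \cdot [q(\bm{x}) \bm{k}(\bm{x},\bm{x}')],
\]
which is valid because the definition of $\mathbb{S}_q$ places the factor $\nabla_{\bm{x}} \log q$ exactly so that multiplication by $q$ converts the expression into a total divergence.

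Before integrating I would verify that the left-hand side of \eqref{eqn: stein on kernel} is well-defined. By (A3) the kernel $k$ and its first partials are continuous on the compact set $\mathcal{X} \cup \partial \mathcal{X}$, while membership of $q$ in $\mathcal{Q}(k)$ supplies $q \in C^1(\mathcal{X} \cup \partial \mathcal{X})$ with $q > 0$ in $\mathcal{X}$. These regularity facts guarantee pointwise existence of $\mathbb{S}_q[\bm{k}(\cdot,\bm{x}')](\bm{x})$, and the product-rule rewrite above identifies $\mathbb{S}_q[\bm{k}(\bm{x},\bm{x}')](\bm{x}) \, q(\bm{x})$ as the divergence of a $C^1$ vector field on a compact set; this is bounded and continuous in $\bm{x}$ and therefore integrable for every fixed $\bm{x}' \in \mathcal{X}$.

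The next step is to apply the divergence theorem. Since $\partial \mathcal{X}$ is piecewise smooth and $q(\cdot)\, \bm{k}(\cdot,\bm{x}') \in C^1(\mathcal{X} \cup \partial \mathcal{X};\mathbb{R}^d)$, I obtain
\[
\int_{\mathcal{X}} \mathbb{S}_q[\bm{k}(\bm{x},\bm{x}')](\bm{x}) \, Q(\d\bm{x}) \;=\; \oint_{\partial \mathcal{X}} q(\bm{x}) \, \bm{k}(\bm{x},\bm{x}') \cdot \bm{n}(\bm{x}) \, S(\d\bm{x}).
\]
The hypothesis $q \in \mathcal{R}(k)$ asserts precisely that $q(\bm{x}) k(\bm{x},\bm{x}') = 0$ whenever $\bm{x} \in \partial \mathcal{X}$, so every component of the integrand on the right vanishes pointwise on $\partial \mathcal{X}$, and the surface integral is zero.

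I do not anticipate a genuine obstacle: the proof is essentially a packaging of Green's identity, and the conditions $\mathcal{Q}(k)$ and $\mathcal{R}(k)$ have been cut out exactly to provide the regularity for the divergence theorem and the boundary annihilation, respectively. The only minor subtlety to watch is that $\mathbb{S}_q[\bm{k}]$ as a pointwise quantity involves $\nabla_{\bm{x}} \log q$, which is controlled by $q \in C^1$ and $q > 0$; the $L^2$ clause in the definition of $\mathcal{Q}(k)$ is not actually needed for this lemma, so the proof should go through under the stated hypotheses without further assumption.
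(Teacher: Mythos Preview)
Your proposal is correct and follows essentially the same route as the paper: rewrite $\mathbb{S}_q[\bm{k}(\bm{x},\bm{x}')]\,q(\bm{x})$ as the total divergence $\nabla_{\bm{x}}\cdot[q(\bm{x})\bm{k}(\bm{x},\bm{x}')]$, apply the divergence theorem, and kill the boundary term via $q\in\mathcal{R}(k)$. The only minor difference is in how existence of the integral is justified: the paper invokes the $L^2$ clause in the definition of $\mathcal{Q}(k)$ to place $\mathbb{S}_q[\bm{k}(\cdot,\bm{x}')]$ in $L^2(\mathcal{X}\cup\partial\mathcal{X},Q)$, whereas you argue directly from continuity on a compact set; your observation that clause (c) is not actually needed for this particular lemma is correct.
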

\begin{proof}
This is a generalisation of Lemma \ref{lem integrate to zero}.
From (A3) and $q \in \mathcal{Q}(k)$, the function $\mathbb{S}_q[\bm{k}(\cdot,\bm{x}')]$ exists on $\mathcal{X}$ for each $\bm{x}' \in \mathcal{X}$.
In particular $\mathbb{S}_q[\bm{k}(\cdot,\bm{x}')] \in L^2(\mathcal{X} \cup \partial \mathcal{X},Q)$, so that the left hand side of Eqn. \ref{eqn: stein on kernel} exists.
Then
\begin{eqnarray*}
\int \mathbb{S}_q[\bm{k}(\bm{x},\bm{x}')] Q(\d{} \bm{x}) & = & \int [ \nabla_{\bm{x}} \cdot \bm{k}(\bm{x},\bm{x}') + \bm{k}(\bm{x},\bm{x}') \cdot \nabla_{\bm{x}} \log q(\bm{x}) ] \; Q(\d{}\bm{x}) \\
& = & \int [ q(\bm{x}) \nabla_{\bm{x}} \cdot \bm{k}(\bm{x},\bm{x}') + \bm{k}(\bm{x},\bm{x}') \cdot \nabla_{\bm{x}} q(\bm{x}) ] \d{}\bm{x} \\
& = & \int \nabla_{\bm{x}} \cdot \{ q(\bm{x}) \bm{k}(\bm{x},\bm{x}') \} \d{}\bm{x} \\
& \stackrel{(*)}{=} & \oint_{\partial \mathcal{X}} \underbrace{q(\bm{x}) \bm{k}(\bm{x},\bm{x}')}_{(**)} \cdot \bm{n}(\bm{x}) \; S(\d{} \bm{x}) \quad  = \quad 0,
\end{eqnarray*}
where $(*)$ is integration by parts and $(**)$  equals zero for all $\bm{x}' \in \partial \mathcal{X}$ since $q \in \mathcal{R}(k)$.
\end{proof}

\begin{proof}[Proof of Lemma \ref{characteristic}]
Recall that, for compact $\mathcal{X} \cup \partial \mathcal{X}$, the notion of $c$-universality is equivalent to \emph{$cc$-universality}, where the (weaker) topology of compact convergence is used in place of the $\|\cdot\|_\infty$ norm topology \citep[][Defn. 4.1]{Carmeli}.

For densities $q,q'$ on $(\mathcal{X} \cup \partial \mathcal{X},\mathcal{B})$ with $q = \d{} Q / \d{} \Lambda$, $q' = \d{} Q' / \d{} \Lambda$, define
\begin{eqnarray*}
T(q,q',k) & := & \left\| \int \mathbb{S}_{q'} [ \bm{k}(\bm{x},\cdot) ] Q(\mathrm{d}\bm{x}) - \int \mathbb{S}_{q'} [ \bm{k}(\bm{x},\cdot) ] Q'(\mathrm{d}\bm{x}) \right\|_{\mathcal{H}},
\end{eqnarray*}
whenever the right hand side exists.
Observe, as in Lemma \ref{new lemma}, that $T(q,q',k)$ is well-defined for all $q,q' \in \mathcal{Q}(k)$.

This proof then proceeds in two steps:

\noindent {\bf Step \#1:}
First, we follow the proof of \citet[][Thm. 2.1]{Chwialkowski} in order to establish that
$$
q,q' \in \mathcal{Q}(k) \cap \mathcal{R}(k), \quad T(q,q',k) = 0 \quad \implies \quad q = q'.
$$
Indeed:
\begin{eqnarray*}
T(q,q',k) & = & \Bigg\| \int \mathbb{S}_{q'} [ \bm{k}(\bm{x},\cdot) ] Q(\mathrm{d}\bm{x})  - \underbrace{\int \mathbb{S}_{q'} [ \bm{k}(\bm{x},\cdot) ] Q'(\mathrm{d}\bm{x}) }_{ = 0 \text{ from Lemma \ref{new lemma}} } \Bigg\|_{\mathcal{H}} \\
& = & \left\| \int \bigg[ \sum_{i=1}^d \nabla_{x_i} k(\bm{x},\cdot) + k(\bm{x},\cdot) \nabla_{x_i} \log q'(\bm{x}) \bigg] Q(\mathrm{d}\bm{x}) \right\|_{\mathcal{H}} \\
& = & \Bigg\| \underbrace{ \int \bigg[ \sum_{i=1}^d \nabla_{x_i} k(\bm{x},\cdot) + k(\bm{x},\cdot) \nabla_{x_i} \log q(\bm{x}) \bigg] Q(\mathrm{d}\bm{x}) }_{ (*) } \\
& & \quad + \underbrace{\int \bigg[ \sum_{i=1}^d k(\bm{x},\cdot) \nabla_{x_i} \log q'(\bm{x}) - k(\bm{x},\cdot) \nabla_{x_i} \log q(\bm{x}) \bigg] Q(\mathrm{d}\bm{x}) }_{ (**) } \Bigg\|_{\mathcal{H}} 
\end{eqnarray*}
where 
$$
(*) = \int \mathbb{S}_q[\bm{k}(\bm{x},\cdot)] Q(\mathrm{d}\bm{x}),
$$ 
which is the zero function from Lemma \ref{new lemma} and $(**)$ is the mean embedding \citep{Smola} of the function
$$
g(\bm{x}) := \sum_{i=1}^d \nabla_{x_i} \log \frac{q'(\bm{x})}{q(\bm{x})}.
$$
Note that $g \in L^2(\mathcal{X} \cup \partial \mathcal{X},Q)$ follows from $q,q' \in \mathcal{Q}(k)$.
By assumption this embedding satisfies 
$$
\left\| \int \bigg[ \sum_{i=1}^d \nabla_{x_i} \log \frac{q'(\bm{x})}{q(\bm{x})} \bigg] k(\bm{x},\cdot) Q(\d{}\bm{x}) \right\|_{\mathcal{H}} = 0.
$$
Since $\mathcal{H}$ is $cc$-universal and $g \in L^2(\mathcal{X} \cup \partial \mathcal{X},\Pi)$, the embedding is zero if and only if $g = 0$ \citep[][Thm. 4.4c]{Carmeli}.
Thus
$$
\sum_{i=1}^d \nabla_{x_i} \log \frac{q'(\bm{x})}{q(\bm{x})} = 0.
$$
This implies that $q'$ and $q$ are proportional on $\mathcal{X}$ and, since $q,q'$ are both densities in $C^1(\mathcal{X} \cup \partial \mathcal{X})$, we must conclude $q=q'$.

\noindent {\bf Step \#2:}
It is sufficient to prove that $\mathcal{H}_+$ is dense in $(C^1(\mathcal{X} \cup \partial \mathcal{X}) , \|\cdot\|_2)$, since this set is dense in $(L^2(\mathcal{X} \cup \partial \mathcal{X},\Pi) , \|\cdot\|_2)$.
Now, suppose $\mathcal{H}_+$ is not dense in $(C^1(\mathcal{X} \cup \partial \mathcal{X}) , \|\cdot\|_2)$.
Then there exists $0 \neq f \in C^1(\mathcal{X} \cup \partial \mathcal{X})$ such that
$$
\int f \; \mathrm{d}\Pi = 0, \quad \quad \int f \psi \; \mathrm{d}\Pi = 0 \quad \forall \; \psi \in \mathcal{H}_0,
$$
the second requirement representing orthogonality of $f$ with respect to $\mathcal{H}_0$.
Compactness of $\mathcal{X} \cup \partial \mathcal{X}$ implies $f \in L^\infty(\mathcal{X} \cup \partial \mathcal{X})$.
Let
$$
Q := \frac{c + f}{c} \; \Pi, \quad Q' := \Pi, \quad c := 1 + \|f\|_\infty,
$$
so that $Q,Q'$ are both distributions with $Q \neq Q'$.
Moreover, under (A$\bar{2}$,4), both $Q,Q'$ admit densities $q,q'$ such that $q,q' \in \mathcal{Q}(k) \cap \mathcal{R}(k)$. 
Indeed, $q \in \mathcal{Q}(k)$ since
\begin{enumerate}
\item[(a)] $q \in C^1(\mathcal{X} \cup \partial \mathcal{X})$
\item[(b)] $q > 0$ on $\mathcal{X}$;
\item[(c)] $\nabla_{x_i} \log q = \frac{\nabla_{x_i} f}{c + f} + \nabla_{x_i} \log \pi \in L^2(\mathcal{X} \cup \partial \mathcal{X},Q')$ for all distributions $Q'$ on $(\mathcal{X} \cup \partial \mathcal{X},\mathcal{B})$, since $\nabla_{x_i} f \in C^0(\mathcal{X} \cup \partial \mathcal{X})$ and $c + f \geq 1$ on $\mathcal{X}$;
\end{enumerate}
and $q \in \mathcal{R}(k)$ since $q(\bm{x}) k(\bm{x},\bm{x}') = \frac{c + f(\bm{x})}{c} \pi(\bm{x}) k(\bm{x},\bm{x}')$, where $\frac{c + f(\bm{x})}{c}$ is bounded on $\mathcal{X}$ and $\pi(\bm{x}) k(\bm{x},\bm{x}') = 0$ for $\bm{x} \in \partial\mathcal{X}$ and $\bm{x}' \in \mathcal{X} \cup \partial \mathcal{X}$.

The purpose of this construction becomes clear from plugging this choice of $q,q'$ into the operator $T$:
\begin{eqnarray*}
T(q,q',k) & = & \left\| \int \mathbb{S}_{q'} [ \bm{k}(\bm{x},\cdot) ] Q(\mathrm{d}\bm{x}) - \int \mathbb{S}_{q'} [ \bm{k}(\bm{x},\cdot) ] Q'(\mathrm{d}\bm{x}) \right\|_{\mathcal{H}} \\
& = & \Bigg\| \frac{1}{c} \int \underbrace{ \mathbb{S}_{q'} [ \bm{k}(\bm{x},\cdot) ] }_{ (***) } f(\bm{x}) \Pi(\d{} \bm{x}) \Bigg\|_{\mathcal{H}} .
\end{eqnarray*}
The function $(***)$ belongs to $\mathcal{H}_0$; the definition of $f$ then implies that this integral is zero (orthogonality with respect to $\mathcal{H}_0$) and so we conclude that $T(q,q',k) = 0$.
From Step \#1 we then conclude that $q = q'$.
This is a contradiction and so $\mathcal{H}_+$ has been shown to be dense in $(C^1(\mathcal{X} \cup \partial \mathcal{X}) , \|\cdot\|_2)$.
\end{proof}

\subsection{Formulae for Wendland Kernels} \label{wendland formulae}

\begin{table}[t!]
\footnotesize
\begin{tabular}{|c|c|p{12cm}|} \hline
Smoothness & Function & Expression \\ \hline
$b=0$ & $\varphi(z)$ & $[(\ell + 1)z + 1] (1-z)_+^{\ell + 1}$ \\
(i.e. $k \in C_2^1$) & $\varphi^{(1)}(z)$ & $- (\ell^2 + 3 \ell + 2) z (1-z)_+^\ell$ \\
& $\varphi^{(2)}(z)$ & $(\ell^2 + 3\ell + 2) [(\ell + 1) z - 1] (1-z)_+^{\ell - 1}$ \\ \hline
$b=1$ & $\varphi(z)$ & $\frac{1}{3} [ (\ell^2 + 4 \ell + 3) z^2 + 3(\ell + 2) z + 3 ] (1-z)_+^{\ell + 2}$ \\
(i.e. $k \in C_2^2$) & $\varphi^{(1)}(z)$ & $- \frac{1}{3} (\ell^2 + 7 \ell + 12) z [(\ell + 1)z + 1] (1-z)_+^{\ell + 1}$ \\
& $\varphi^{(2)}(z)$ & $\frac{1}{3}(\ell^2 + 7 \ell + 12) (\ell^2 z^2 + 4 \ell z^2 - \ell z + 3z^2 - 1) (1-z)_+^\ell$ \\ \hline
$b=2$ & $\varphi(z)$ & $\frac{1}{15}[(\ell^3+9\ell^2+23\ell + 15) z^3 + (6\ell^2 + 36\ell + 45)z^2 + 15(\ell+3)z + 15] (1-z)_+^{\ell + 3}$ \\
(i.e. $k \in C_2^3$) & $\varphi^{(1)}(z)$ & $- \frac{1}{15} (\ell^2 + 11\ell + 30) (\ell^2 z^2 + 4 \ell z^2 + 3 \ell z + 3 z^2 + 6z + 3) z (1-z)_+^{\ell + 2}$ \\
& $\varphi^{(2)}(z)$ & $\frac{1}{15} (\ell^2 + 11\ell + 30) (\ell^3 z^3 + 9 \ell^2 z^3 + 23\ell z^3 + 6 \ell z^2 - 3 \ell z + 15 z^3 + 15z^2 - 3z - 3) (1-z)_+^{\ell + 1}$ \\ \hline
\end{tabular}

\caption{Wendland's compact support functions and their first and second derivatives.
Here $\ell = \lfloor \frac{d}{2} + b + 2 \rfloor$.
Note that $z_+^k$ should be interpreted as $(z_+)^k$ rather than $(z^k)_+$.
The associated native spaces are equivalent to $H^{\frac{d}{2} + b + \frac{3}{2}}(\mathbb{R}^d)$.}
\label{tab:Wendland}
\end{table}
 
In this final section, explicit formulae for derivatives of the compact support radial functions of \cite{Wendland2} are provided in Table \ref{tab:Wendland}.
To derive the kernel $k_+$, let $r = \|\bm{x} - \bm{x}'\|_2$.
Then the identities
\begin{eqnarray*}
\nabla_{\bm{x}} \tilde{k} \; = \; - \nabla_{\bm{x}'} \tilde{k} & = & \frac{\bm{x} - \bm{x}'}{hr} \varphi^{(1)} \left(\frac{r}{h}\right) \\
\nabla_{\bm{x}} \cdot \nabla_{\bm{x}'} \tilde{k} & = & - \frac{1}{h^2} \varphi^{(2)} \left(\frac{r}{h}\right) 
\end{eqnarray*}
and
\begin{eqnarray*}
\nabla_{\bm{x}} k(\bm{x},\bm{x}') & = & [\nabla_{\bm{x}} \delta(\bm{x})] \delta(\bm{x}') \tilde{k}(\bm{x},\bm{x}') + \delta(\bm{x}) \delta(\bm{x}') \nabla_{\bm{x}} \tilde{k}(\bm{x},\bm{x}') \\
\nabla_{\bm{x}'} k(\bm{x},\bm{x}') & = & \delta(\bm{x}) [\nabla_{\bm{x}'} \delta(\bm{x}')] \tilde{k}(\bm{x},\bm{x}') + \delta(\bm{x}) \delta(\bm{x}') \nabla_{\bm{x}'} \tilde{k}(\bm{x},\bm{x}') \\
\nabla_{\bm{x}} \cdot \nabla_{\bm{x}'} k(\bm{x} , \bm{x}') & = & [\nabla_{\bm{x}} \delta(\bm{x})] \cdot [\nabla_{\bm{x}'} \delta(\bm{x}')] \tilde{k}(\bm{x},\bm{x}') + \delta(\bm{x}') [\nabla_{\bm{x}} \delta(\bm{x})] \cdot [\nabla_{\bm{x}'} \tilde{k}(\bm{x},\bm{x}')] \\
& & \quad + \delta(\bm{x}) [\nabla_{\bm{x}'} \delta(\bm{x}')] \cdot [\nabla_{\bm{x}} \tilde{k}(\bm{x},\bm{x}')] + \delta(\bm{x}) \delta(\bm{x}') \nabla_{\bm{x}} \cdot \nabla_{\bm{x}'} \tilde{k}(\bm{x},\bm{x}')
\end{eqnarray*}
can be used to obtain the kernel $k_+ = c + k_0$ in closed-form, in combination with Eqn. \ref{k0 expression}.

\end{document}